\setlist[itemize]{label=$\diamond$}
\newcommand{\R}{\mathbb{R}}  
\newcommand{\N}{\mathbb{N}} 
\newcommand{\Z}{\mathbb{Z}} 
\newcommand{\C}{\mathbb{C}} 
\newcommand{\D}{\mathbb{D}}
\newcommand{\tbif}{T_\mathrm{bif}}
\newcommand{\im}{\mathrm{Im}\,}
\newcommand{\md}{\mathcal{M}_d}
\newcommand{\la}{\lambda}
\newtheorem{theo}{Theorem}
\newtheorem{prop}[theo]{Proposition}
\newtheorem{coro}[theo]{Corollary}
\newtheorem{cor}[theo]{Corollary}
\newtheorem{defi}[theo]{Definition}
\newtheorem{rem}[theo]{Remark}
\newtheorem{lem}[theo]{Lemma}
\numberwithin{theo}{section}
\newcommand{\co}[1]{^{\circ {#1}}}
\newcommand{\limn}{\lim_{n \rightarrow \infty}}
\newcommand{\lam}{\lambda}
\newcommand{\mubif}{\mu_{\mathrm{bif}}}
\newcommand{\bif}{\mathrm{Bif}}
\newcommand{\supp}{\mathrm{supp}\,}
\newcommand{\re}{\mathrm{Re}\,}
\newcommand{\eps}{\epsilon}
\def\C{{\mathbb C}}
\def\N{{\mathbb N}}
\def\R{{\mathbb R}}
\def\Z{{\mathbb Z}}
\def\Q{{\mathbb Q}}
\def\D{\mathbb{D}}
\def\poly{\textup{Poly}}
\definecolor{Violet}{cmyk}{0.79,0.88,0,0}
\definecolor{ForestGreen}{cmyk}{0.76, 0, 0.76, 0.45}
\definecolor{Lavender}{cmyk}{0,0.48,0,0}
\title{Cubic Siegel polynomials and the bifurcation measure}
\author{Matthieu Astorg}
\email{matthieu.astorg@univ-orleans.fr}
\address{Institut Denis Poisson, Collegium Sciences et Techniques, Université d'Orléans, 
Rue de Chartres B.P. 6759, 45067 Orléans cedex 2 France.}
\author{Davoud Cheraghi}
\email{d.cheraghi@imperial.ac.uk}
\address{Department of Mathematics, Imperial College London, London SW7 2AZ, United Kingdom}
\author{Arnaud Chéritat}
\email{arnaud.cheritat@math.univ-toulouse.fr}
\address{Institut de Mathématiques de Toulouse,
	118 Route de Narbonne - Bâtiment 1R3
	31062 TOULOUSE CEDEX 9}
\subjclass[2010]{Primary 37F10; Secondary 37F46 }
\date{\today}
\begin{document}

\begin{abstract}
We prove that cubic polynomial maps with a fixed Siegel disk and a critical orbit eventually landing 
inside { that} Siegel disk lie in the support of the bifurcation measure $\mubif$.  
{ This answers a question of Dujardin in positive}. 
{ Our result implies the existence of holomorphic disks in the support of $\mubif$, and also 
implies that} the set of  rigid parameters is not closed in the moduli space of cubic polynomials.
\end{abstract}

\maketitle

\section{Introduction}
Let $\poly_d$ denote the space of monic centered polynomial maps of degree $d \geq 2$, and let 
$\md$ denote the moduli space of rational maps of degree $d \geq 2$ on the Riemann sphere.
In general, we let $M=\poly_d$ or $\md$. 
By definition, the \emph{bifurcation locus} $\bif \subset M$ is the complement of the set of 
$J$-stability, that is, $\bif$ is the set of parameters around which the Julia set does not locally move holomorphically.
The celebrated results of Mañé-Sad-Sullivan \cite{MSS83} and Lyubich \cite{Ly83} provide several other 
equivalent characterisations of the bifurcation locus, in terms of the behaviour of the critical orbits or in terms 
of periodic cycles changing from repelling to attracting or vice-versa.

In \cite{demarco2001dynamics}, DeMarco introduced a \emph{bifurcation current} $\tbif$, which is a closed 
positive current of bidegree $(1,1)$ whose support is equal to $\bif$. 
Bassanelli and Berteloot \cite{berteloot2007bifurcation} studied the higher degree currents 
$\tbif^k:=\tbif \wedge \ldots \wedge \tbif$, $1 \leq k \leq \dim M$, which detect higher codimensional 
bifurcation phenomena. 
When $k=\dim M$, $\tbif^k$ is called the \emph{bifurcation measure} and is denoted by $\mubif$. 
This is a finite positive measure, which is the Monge-Ampère measure of the plurisubharmonic potential 
$M \ni f \mapsto L(f)$, where $L(f)$ is the Lyapunov exponent of the unique measure of maximal entropy.
The support of $\mubif$ is also called the maximal bifurcation locus, in the sense that it 
detects maximal codimension bifurcation phenomena.

In recent years, the study of the bifurcation measure $\mubif$ and its support has attracted considerable attention.
For instance, by the works of Bassanelli-Berteloot \cite{berteloot2007bifurcation}, 
Buff-Epstein \cite{buff2009bifurcation} and Dujardin-Favre \cite{dujardin2008distribution}, it is known 
that the support of $\mubif$ coincides with the closure of the set of parameters with the maximal number of 
non-repelling cycles (namely $d$ in the case of $\poly_d$ or $2d-2$ in the case of $\md$), and also with the 
closure of the set of parameters for which all critical points are strictly preperiodic to repelling cycles.

For parameters in the support of $\mubif$, all critical points must be active. 
However, this is not a sufficient condition. For instance, the following example is due to Douady 
\cite[Example 6.13]{dujardin2008distribution}. 
The polynomial map $f(z):=z+z^2/2+z^3 \in \poly_3$ has one parabolic fixed point which attracts both critical points, 
so they are both active at $f$ in the family $\poly_3$. However, $f$ is \emph{parabolic attracting}, 
which means that any small perturbation of $f$ possesses either a parabolic or an attracting fixed point. 
It follows that $f \notin \supp\,\mubif$.  
A more subtle example is given in \cite{inou2020support} by Inou and Mukherjee, { where} they 
construct a real-analytic { family} $(f_t)_{t \in I}$ of \emph{parabolic repelling} cubic polynomials, 
for which both critical points are { attracted to} the parabolic fixed point, { but} 
none of the $f_t$ { is} in the support of $\mubif$.
Loosely speaking, these examples show that in general a certain independence { of} 
the critical orbits is required for a map $f \in M$ to be in the support of $\mubif$.

In this paper, we address the following question { stated} by Dujardin in \cite{dujardin}: 
Given a cubic polynomial map with a Siegel disk and a critical orbit which eventually lands inside the Siegel disk, 
does { that} cubic polynomial belong to the support of $\mubif$?
Note that the second critical orbit must accumulate on the boundary of the Siegel disk, so that both critical points 
are active in $\poly_3$ but are related through the Siegel disk. 
Such maps also have a one-parameter family of quasi-conformal deformations, similar to the examples of Douady 
and Inou-Mukherjee. { However, in our case the parameter family is complex.} 

We work with the family of cubic polynomials with a marked fixed point placed at $0$,
parametrised as 
\begin{equation}\label{eq:markedcubic}
f_{\lam,a}(z)=\lam z + az^2+z^3, \quad \lambda, a\in \mathbb{C}.
\end{equation}

Let $\lam=e^{2i\pi \theta}$, for an irrational number $\theta$.  
The map $f_{\la,a}$ is \emph{linearisable} near the origin if there is a { conformal change 
of coordinate $\phi: V \to \D(0,r)$ defined on a neighbourhood $V$ of the origin such that 
$\phi  \circ f_{\la,a} = \la \phi$.} 
The maximal domain of that linearisation is called the \emph{Siegel disk} of $f_{\lam, a}$. 
By the classic work of { Siegel \cite{Sie42} and Brjuno \cite{Brj71}}, 
if $\theta$ satisfies the so-called \emph{Brjuno condition} 
$\sum_{n \geq 0} q_n^{-1} \log q_{n+1} <+\infty,$
then for every $a \in \mathbb{C}$, $f_{\lam, a}$ is linearisable near the origin. 
{ Here $(p_n/ q_n)_{n \in \N}$ is the sequence of best rational approximants of $\theta$.}

{ Let us say that} a cubic polynomial $f_{\la,a}$ is a \emph{Siegel polynomial of capture type}, 
if $f_{\la,a}$ has a Siegel disk containing the origin and a critical { point whose orbit 
eventually lands} inside that Siegel disk.
Our main theorem is the following:

\begin{theo}\label{th:supp}
{ Any Siegel polynomial of capture type $f_{\lam,a}$ 
is contained in the support of $\mubif$.}
\end{theo}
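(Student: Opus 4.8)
The plan is to show that a Siegel polynomial of capture type $f_{\la,a}$ cannot be approximated, in $\poly_3$, by maps avoiding maximal-codimension bifurcations, by exhibiting a \emph{two-real-parameter} (one complex-parameter) family of quasi-conformal deformations of $f_{\la,a}$ that already lies in $\bif$, and then promoting this to membership in $\supp\,\mubif$ via a transversality/positivity argument. Concretely, since one critical orbit lands in the Siegel disk after finitely many steps, there is a preimage component $U$ of the Siegel disk $\Delta$ containing a critical point $c_1$, and $f_{\la,a}^N$ maps $U$ onto $\Delta$. One can perform a holomorphic surgery supported on the grand orbit of $\Delta$: pull back a one-complex-parameter family of Beltrami coefficients on $\Delta$ (invariant under the linearised rotation, so genuinely a deformation — this is where we use that $\theta$ is Brjuno and the Siegel disk is a genuine rotation domain) and solve the Beltrami equation. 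This produces a holomorphic map $s \mapsto f_{\la(s),a(s)}$ from a disk into $\poly_3$ along which the conjugacy class varies, the first critical point stays captured, and the rotation number of the fixed point varies (or, in a variant, stays fixed while the internal geometry changes). Along this disk the second critical point accumulates on $\partial\Delta$, which moves; hence the disk is contained in $\bif$.

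The next step is to upgrade ``contained in a holomorphic disk inside $\bif$'' to ``in $\supp\,\mubif$''. For this I would use the characterisations recalled in the introduction: $\supp\,\mubif$ is the closure of the set of parameters for which all critical points are strictly preperiodic to repelling cycles (Buff–Epstein, Dujardin–Favre). So it suffices to produce, arbitrarily close to $f_{\la,a}$, a map with both critical points strictly preperiodic to repelling cycles. Start from the deformation disk above and note that by Zakeri-type density results the set of parameters on it for which the fixed point has a parabolic (or repelling, with the Siegel disk degenerating) multiplier, and the captured critical point becomes \emph{strictly preperiodic to the now-repelling boundary cycle}, is dense near $s=0$; a further perturbation in the transverse direction in $\poly_3$ then makes the \emph{second} critical point land on a repelling cycle, using that its activity gives, by Montel, an open set of parameters where its orbit hits any prescribed repelling periodic point. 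Composing these two perturbations (first degenerate the Siegel disk making $c_1$ preperiodic-repelling, then catch $c_2$) yields Misiurewicz parameters converging to $f_{\la,a}$.

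Alternatively — and this is probably the cleaner route to write — I would work directly with the bifurcation current. On the deformation disk $\mathbb{D} \ni s \mapsto f_{\la(s),a(s)}$ the second critical point $c_2(s)$ is active, so its activity current (the pullback of $\tbif$) is a nonzero positive measure $\nu$ on $\mathbb{D}$ with $0 \in \supp\nu$. Transversality of the deformation disk to the hypersurface $\tbif = $ (contribution of the first critical point) — which one checks because the surgery parameter $s$ genuinely changes the modulus/rotation data of $\Delta$, independently of $c_2$'s behaviour — lets one wedge: $\tbif \wedge \tbif$ restricted near $f_{\la,a}$ dominates a positive multiple of the product of the two critical activity currents, which is nonzero at $f_{\la,a}$. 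Hence $f_{\la,a} \in \supp(\tbif^2) = \supp\,\mubif$. The main obstacle I anticipate is precisely this \emph{independence/transversality} claim: one must show that the quasi-conformal deformation along $\Delta$ does not accidentally force the second critical orbit to track the first, i.e. that the ``direction'' in $\poly_3$ tangent to the surgery is transverse to the locus where $c_2$ is passive; equivalently, that the two activity currents are not carried by the same local hypersurface. This requires a careful analysis — likely via an explicit computation of $\partial(\la,a)/\partial s$ against the derivative of the multiplier of a nearby periodic cycle, or via a linear-independence argument for the relevant infinitesimal deformations in the style of Dujardin–Favre and Buff–Gauthier — and is where the Brjuno condition and the rotation-domain structure of $\Delta$ enter essentially, since they guarantee the surgery is nontrivial and its effect on the fixed-point data is nondegenerate.
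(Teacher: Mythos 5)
Your proposal rests on two ideas: (a) constructing a one-complex-parameter disk of quasiconformal deformations of $f_{\lam,a}$ by wringing the conformal structure of the Siegel disk, and (b) upgrading from ``the disk lies in $\bif$'' (or ``the second critical point is active along the disk'') to membership in $\supp\mubif$. Step (a) is fine as a construction, but the conclusion you draw from it is wrong. A Beltrami coefficient supported on the Siegel disk must be invariant under the internal rotation for the surgery to push down to a conjugacy of the dynamics; for an irrational $\theta$ this forces it to be radially symmetric in the linearising coordinate, and then the qc straightening intertwines with $R_\theta$ — so the deformed map has the \emph{same} rotation number and multiplier. Your deformation disk therefore stays inside the slice $\{f_{\lam,a}:a\in\C\}$ with $\lam$ fixed, where capture parameters are $J$-stable by Zakeri's theorem (cited in Section~\ref{sec:qcdef}). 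The disk is in the stability locus of the slice, not in $\bif$. This kills both of your continuation routes: Route~1 talks about parameters on the disk ``for which the fixed point has a parabolic or repelling multiplier,'' which never happens since $\lam$ is constant there; and in Route~2 the activity current of $c_2$ restricted to this disk vanishes, since $c_2$ is passive within the slice (the boundary of the Siegel disk moves holomorphically over the capture component). So neither the wedge of activity currents nor the Misiurewicz-density argument even gets started on this disk.

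You do locate the genuine crux — independence/transversality of the two critical activities in $\poly_3$ near $f_{\lam,a}$ — but you offer no actual argument for it and explicitly defer it. That transversality \emph{is} the content of the theorem; the rest is standard machinery. The paper makes it concrete by perturbing the eigenvalue rather than deforming the disk: one takes the parabolic approximations $\lam_n=e^{2\pi i p_n/q_n}$, studies $G_{n,a}=\phi_a\circ f_{\lam_n,a}^{\circ q_n}\circ\phi_a^{-1}$ in the linearising coordinate, and shows (Lemma~\ref{L:chi_n-convergence}, using the asymptotics of Theorem~\ref{th:bn} for $|b_n(a)|^{1/q_n}$) that as $a$ ranges over a small ball in a non-degenerate parabolic locus, the normalised position $\ell_{n,a}(K_n(a))/q_n$ of the captured critical orbit converges to a \emph{non-constant} holomorphic function of $a$. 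Since the $q_n$ parabolic attracting petals of $f_{\lam_n,a}$ are spaced at angle $2\pi/q_n$ (Proposition~\ref{P:bands-in-petals} and Corollary~\ref{C:petal-in-basin}), this forces the petal index of the captured critical orbit to change as $a$ moves a definite distance in the slice $\{f_{\lam_n,a}\}$ (Proposition~\ref{P:perturb}), hence $J$-instability in that one-dimensional slice (Proposition~\ref{prop:unstable}). By Mañé–Sad–Sullivan this yields nearby parameters with a second non-persistent neutral cycle in addition to the parabolic fixed point at $0$, and Buff–Epstein's characterisation places them in $\supp\mubif$; Avila–Buff–Chéritat then upgrades from bounded type $\theta$ to general Brjuno $\theta$. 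In short: the transversality you need does not come for free from qc surgery inside the Siegel slice — it is extracted from the discrete combinatorics of parabolic petals after perturbing $\lam$ off the unit circle to a root of unity, and that mechanism is entirely absent from your proposal.
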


{ For a fixed $\lam= e^{2i \pi \theta}$}, it is not difficult to prove that the bifurcation locus 
of the slice $\{f_{\la,a} : a \in \C\}$ is contained in the support of $\mubif$. 
However, if $\theta$ is a Brjuno number and $f_{e^{2i\pi\theta},a}$ is a Siegel polynomial of  capture type, 
then it belongs to the stability locus of this slice. 
This implies { the following two} consequences:

\begin{coro}
There are holomorphic disks in the support of $\mubif$ in $\poly_3$.
\end{coro}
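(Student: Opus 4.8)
The plan is to combine Theorem~\ref{th:supp} with a dimension-counting and topological argument to produce an honest holomorphic disk inside $\supp\,\mubif$. First I would fix a Brjuno number $\theta$ and set $\lam = e^{2i\pi\theta}$, so that by Siegel--Brjuno every $f_{\lam,a}$, $a \in \C$, is linearisable near the origin and carries a genuine Siegel disk $\Delta_{\lam,a}$. Within the one-parameter slice $S_\lam := \{f_{\lam,a} : a \in \C\}$, I would identify the set $C_\lam$ of parameters $a$ such that $f_{\lam,a}$ is a Siegel polynomial of capture type, i.e. such that some iterate of one of the two critical points of $f_{\lam,a}$ lands in $\Delta_{\lam,a}$. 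Theorem~\ref{th:supp} says that for each such $a$, $f_{\lam,a} \in \supp\,\mubif$; so it suffices to show that $C_\lam$ contains a (non-degenerate, i.e. non-constant) holomorphic disk in the $a$-parameter. Since the inclusion $a \mapsto f_{\lam,a}$ is a holomorphic injection $\C \hookrightarrow \poly_3$, any open subset of $\C$ contained in $C_\lam$ yields a holomorphic disk in $\supp\,\mubif \subset \poly_3$.

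The key step is therefore to exhibit a nonempty \emph{open} set of parameters $a$ of capture type. I would argue as follows. Pick a parameter $a_0$ for which $f_{\lam,a_0}$ has a captured critical orbit --- such $a_0$ exist by the hypothesis of Theorem~\ref{th:supp}, or can be produced directly: choose a point $w_0$ strictly inside the Siegel disk (e.g.\ on the boundary of a sub-disk of the linearising domain where linearisation is established) and solve for $a$ so that $f_{\lam,a}^{\circ N}(c(a)) = w_0$ for a suitable $N$, where $c(a)$ is a holomorphically-varying choice of critical point. Now I claim the capture condition is open: the Siegel disk $\Delta_{\lam,a}$ varies lower-semicontinuously --- more precisely, any compact subset $K$ of $\Delta_{\lam,a_0}$ that is contained in the image of the Brjuno linearisation on a disk $\D(0,r)$ with $r$ fixed remains inside $\Delta_{\lam,a}$ for $a$ near $a_0$, because the linearising coordinate $\phi_{\lam,a}$ on the fixed-radius disk depends continuously (indeed holomorphically) on $a$. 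Meanwhile the finite orbit segment $c(a), f_{\lam,a}(c(a)), \dots, f_{\lam,a}^{\circ N}(c(a))$ depends continuously on $a$. Hence if $f_{\lam,a_0}^{\circ N}(c(a_0))$ lies in the interior of such a $K$, then $f_{\lam,a}^{\circ N}(c(a)) \in \Delta_{\lam,a}$ for all $a$ in a neighbourhood of $a_0$, so a whole disk $\D(a_0, \delta) \subset C_\lam$.

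Putting it together: the restriction of the inclusion $\iota : \C \to \poly_3$, $a \mapsto f_{\lam,a}$, to $\D(a_0,\delta)$ is a non-constant holomorphic map whose image lies in $C_\lam$ and hence, by Theorem~\ref{th:supp}, in $\supp\,\mubif$. This image is a holomorphic disk in $\supp\,\mubif \subset \poly_3$, which is the assertion of the Corollary. (One may upgrade ``disk'' from ``image of $\D$'' to an honest embedded holomorphic disk by noting $\iota$ is injective, or simply interpret ``holomorphic disk'' as the image of a non-constant holomorphic map from $\D$, which is the standard usage in this context.)

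The main obstacle I expect is the openness of the capture condition, specifically the lower-semicontinuity of the Siegel disk: in general the Siegel disk does \emph{not} vary continuously (it can collapse), so the argument must be run on a \emph{fixed-radius} piece of the linearisation provided by the Brjuno estimate, which \emph{does} vary holomorphically in $a$, rather than on the full Siegel disk. One must check that the captured point can be arranged to land not merely in $\Delta_{\lam,a_0}$ but in the image of this uniformly-controlled sub-linearisation --- this is automatic if one constructs the capture by the solve-for-$a$ method above with target $w_0$ inside the fixed-radius image, but needs a word of justification if $a_0$ is merely given abstractly by the theorem's hypothesis (in that case one iterates the captured point forward until it enters the uniformly-linearisable core, which happens since every orbit in the Siegel disk is recurrent under the rotation and the core has nonempty interior). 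The rest is routine continuity of finitely many iterates and the observation that a holomorphic slice is a complex submanifold.
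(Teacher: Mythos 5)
Your argument is essentially the paper's: fix a Brjuno $\theta$, observe that in the slice $\{f_{e^{2i\pi\theta},a}:a\in\C\}$ the set of capture parameters is open and nonempty, and push the resulting disk in the $a$-plane through $a\mapsto f_{e^{2i\pi\theta},a}$ into $\supp\,\mubif$ via Theorem~\ref{th:supp}. The only difference is how openness is established: the paper simply cites Zakeri's result that capture components are open stability components in the slice (recalled in Section~\ref{sec:qcdef}), whereas you re-derive openness directly by running the Brjuno linearisation on a fixed-radius core, where the linearising coordinate depends holomorphically on $a$, and exploiting continuity of the finitely many iterates of the critical point. Your direct argument is correct and a little more self-contained --- and correctly identifies the genuine subtlety (Siegel disks need not vary continuously, so one must confine the captured point to the uniformly-controlled sub-linearisation) --- but it is proving a known statement that the paper prefers to quote.
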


{Recall that a polynomial $P$ is called conformally rigid if any polynomial which 
is quasi-conformally conjugate to $P$ is affinely conjugate to $P$. 
Following the pioneering works of Sullivan \cite{Su85} and Thurston \cite{DH93}, now there is substantial body of work related to the 
quasi-conformal and conformal rigidity conjectures in complex dynamics (MLC conjecture and higher degree analogues).}

\begin{coro}
{ The set of conformally rigid cubic polynomials is not closed in $\poly_3$.}
\end{coro}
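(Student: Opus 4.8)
The plan is to derive Corollary 3 (the non-closedness of conformally rigid cubic polynomials) as a quick consequence of Theorem 1.1 together with the classical dichotomy between rigid and flexible parameters. First I would recall that conformally rigid cubic polynomials are dense in $\supp\mubif$: indeed, by the characterisation of $\supp\mubif$ recalled in the introduction (Bassanelli--Berteloot, Buff--Epstein, Dujardin--Favre), the closure of the set of Misiurewicz parameters --- those for which every critical point is strictly preperiodic to a repelling cycle --- is exactly $\supp\mubif$, and Misiurewicz polynomials are conformally rigid by a standard argument (any quasi-conformal conjugacy must carry the postcritical set to the postcritical set and the invariant line field would have to be supported on a set of positive measure, which is impossible here; alternatively one invokes the no-invariant-line-fields theorem on the Julia set for such maps, which here has empty interior).

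The second ingredient is that a Siegel polynomial of capture type $f_{\lam,a}$ is \emph{not} conformally rigid: such a map carries a nontrivial Beltrami differential supported on (the grand orbit of) its Siegel disk, and the one-parameter family of quasi-conformal deformations obtained by pushing this Beltrami coefficient forward gives polynomials quasi-conformally but not affinely conjugate to $f_{\lam,a}$. Concretely, inside the Siegel disk one conjugates the rotation $z\mapsto\lam z$ to $z\mapsto\lam' z$ for $\lam'$ near $\lam$ (or more simply uses a Beltrami form invariant under the linearised dynamics), spreads it over the grand orbit of the disk, integrates it, and conjugates $f_{\lam,a}$ to a new cubic polynomial; since the rotation number changes (or more robustly, since the modulus of the quasi-conformal conjugacy is nontrivial while an affine conjugacy between cubic polynomials in the normal form $f_{\lam,a}$ is rigid), the resulting map is not affinely conjugate to $f_{\lam,a}$. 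Hence $f_{\lam,a}\notin(\text{rigid locus})$.

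Putting these together: by Theorem 1.1 the Siegel polynomial of capture type $f_{\lam,a}$ lies in $\supp\mubif$, which is the closure of the conformally rigid (e.g. Misiurewicz) locus; but $f_{\lam,a}$ itself is not conformally rigid by the deformation argument. Therefore the rigid locus is not closed in $\poly_3$, since it has a limit point outside itself. I would phrase this as: ``Let $f_{\lam,a}$ be a Siegel polynomial of capture type. By Theorem~\ref{th:supp}, $f_{\lam,a}\in\supp\mubif$, which coincides with the closure of the set of Misiurewicz parameters (all of which are conformally rigid). On the other hand, $f_{\lam,a}$ admits a nontrivial one-parameter family of quasi-conformal deformations supported on the grand orbit of its Siegel disk, so it is not conformally rigid. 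Hence the set of conformally rigid cubic polynomials is not closed.''

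The main obstacle --- really the only non-routine point --- is making precise and citable that the one-parameter family of quasi-conformal deformations of a Siegel polynomial of capture type genuinely lands outside the affine conjugacy class, i.e. that these deformations are nontrivial at the level of $\poly_3$ (equivalently of $\mathcal{M}_3$). This is where one must be careful: one needs that varying the Beltrami coefficient inside the Siegel disk changes the conformal class, which follows because the conformal modulus of an annulus inside the Siegel disk (between two invariant circles) changes, or because the rotation number of the fixed point at $0$ changes --- and that data is a conformal invariant distinguishing the maps. Since such quasi-conformal deformations of Siegel disks are classical (going back to Douady--Hubbard and Zakeri, and explicitly invoked for these very families in the cited works of Douady and Inou--Mukherjee), I would simply point to that literature rather than reprove it; everything else in the corollary is a formal deduction from Theorem~\ref{th:supp} and the already-recalled description of $\supp\mubif$.
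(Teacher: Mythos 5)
Your argument is essentially identical to the paper's: by Theorem~\ref{th:supp} a Siegel polynomial of capture type lies in $\supp\mubif$, which is the closure of the set of (strictly) post-critically finite parameters, and those are conformally rigid by Thurston's theorem \cite{DH93}, while capture-type maps are not rigid because they admit nontrivial quasi-conformal deformations supported on the grand orbit of the Siegel disk (the paper points to Zakeri's Theorem~7.5 for this). The only cosmetic difference is that you justify rigidity of Misiurewicz maps via a no-invariant-line-fields heuristic, where the paper simply cites \cite{DH93}; that reference is the cleaner route, but the structure of your deduction is the same.
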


{ Indeed, Siegel polynomials of capture type are not conformally rigid (see Section \ref{sec:qcdef}), 
but strictly post-critically finite maps are conformally rigid \cite{DH93}, and form a dense subset of $\supp \mubif$ by \cite{buff2009bifurcation}.}

\subsection*{Acknowledgments}

The first named author has received funding from the ANR Project 
ANR PADAWAN /ANR-21-CE40-0012-01 and the  PHC Galileo project G24-123.
 The first and second named authors would like to thank CNRS-Imperial College London “Abraham de
Moivre” International Research Laboratory for their kind support and hospitality during the visit of M.~A. to Imperial College in Spring 2024, 
when this research was carried out.

\section{From elliptic to parabolic}\label{S:parabolic-elliptic}
Within Section~\ref{S:parabolic-elliptic} we assume that $\theta \in \R \setminus \Q$ is a bounded type number. 
In particular, $\theta$ is a Brjuno number. 

{ Let us fix $\la = e^{2i \pi \theta}$ and consider the family of maps
\[f_{\lam,a}(z)=\lam z + az^2+z^3, \quad  a\in \mathbb{C}.\]
We shall denote the Siegel disk of $f_{\la, a}$ with the notation $\Delta_\theta(a)$.} 

\subsection{Capture components}\label{sec:qcdef}
\begin{defi}
By a capture component we mean a connected component of the set 
\[\{a \in \C : f_{\la,a} \text{ is a polynomial of capture type }  \}.\]
\end{defi}

By \cite[Theorem 7.3]{Za99}, capture components are (open) stability components in the slice of maps $\{f_{\la,a}: a \in \C\}$, 
and are contained in the connectedness locus.
In particular, they are simply connected (by the maximum principle).
By \cite[Lemma 7.4]{Za99}, each capture component $V$ contains a unique \emph{centre}, that is, a parameter $a_0 \in V$ where a critical point of $f_{\la,a_0}$ 
is pre-periodic to $0$. 
Moreover, all cubic polynomials in a capture component minus its center point are quasi-conformally conjugate to each other on 
the Riemann sphere (\cite[Theorem 7.5 (a)]{Za99}).

\subsection{Asymptotic size and conformal radius}\label{SS:asymp-size}
Let $p_n/q_n$, for $n \geq 1$, be the sequence of the best rational approximants of $\theta$. 
Define $\la_n= e^{2i \pi p_n/q_n}$.

By \cite{Cheri2020}, for all $n \in \N^*$ there is a degree $q_n$ polynomial map $b_n$ such that
\begin{equation}\label{E:b_n-introduced}
f_{\lam_n,a}\co{q_n}(z)=z+b_n(a)z^{q_n+1} + O(z^{q_n+2}).
\end{equation}

{ Let us say that an open set $\Omega \subset \C$ is a non-degenerate parabolic locus, if for all sufficiently large $n \geq 1$, 
$b_n$ is non-zero on $\Omega$.}

Let $\mathcal{C}_\theta$ denote the Zakeri curve \cite{Za99}, i.e. 
\[\mathcal{C}_\theta = \{a \in \C : \text{ both critical points of $f_{\lambda, a}$ belong to $\partial \Delta_\theta(a)$}\}.\] 
Let $r_\theta(a)$ denote the conformal radius of the Siegel disk of $f_{\la,a}$ at $0$.

The following result will be crucial to our argument:

\begin{theo}[\cite{Cheri2020}]\label{th:bn}
For every irrational number of bounded type $\theta$ { and every $a \in \mathbb{C}$}, 
\[\lim_{n\to \infty} |b_n(a)|^{1/q_n} = 1/r_\theta(a).\] 
{ Moreover, the convergence is uniform on compact subsets of any non-degenerate parabolic locus in $\C$.} 
\end{theo}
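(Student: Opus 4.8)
The plan is to prove the two inequalities $\limsup_n |b_n(a)|^{1/q_n}\le 1/r_\theta(a)$ and $\liminf_n |b_n(a)|^{1/q_n}\ge 1/r_\theta(a)$ separately; the first is soft and the second carries all the difficulty. Throughout I fix $a$, set $\la=e^{2i\pi\theta}$, $f=f_{\la,a}$, $\Delta=\Delta_\theta(a)$, $r=r_\theta(a)$, and let $h\colon\mathbb D(0,r)\to\Delta$ be the linearising parametrisation, so $h(0)=0$, $h'(0)=1$, and $f\circ h=h\circ R_\la$ with $R_\la(w)=\la w$. Two elementary observations are used repeatedly: only the linear coefficient moves, so $f_{\la_n,a}-f=(\la_n-\la)\cdot\mathrm{id}$, and $|\la_n-\la|\le 2\pi|q_n\theta-p_n|/q_n$, so $q_n|\la_n-\la|\to 0$ --- even under the Brjuno condition, since $|q_n\theta-p_n|<1/q_{n+1}$, while bounded type gives in addition $|q_n\theta-p_n|\asymp 1/q_n$.

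For the upper bound I would conjugate by $h$: for each $\rho'<r$ and $n$ large, $F_n:=h^{-1}\circ f_{\la_n,a}\circ h$ is holomorphic on $\mathbb D(0,\rho')$ with $F_n=R_\la+O(|\la_n-\la|)$ in $C^1$ on $\overline{\mathbb D(0,\rho')}$, hence $\mathrm{Lip}(F_n)\le 1+O(|\la_n-\la|)$. Then the $F_n$-orbit of any point of $\mathbb D(0,\rho')$ stays in $\mathbb D(0,(\rho'+r)/2)$ for at least $q_n$ steps, and a telescoping estimate gives $\|F_n\co{q_n}-\mathrm{id}\|_{C^0(\overline{\mathbb D(0,\rho')})}\le Cq_n|\la_n-\la|+|\la^{q_n}-1|=O(|q_n\theta-p_n|)\to 0$, using $R_\la\co{q_n}=R_{\la^{q_n}}$ and $\la^{q_n}\to 1$. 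Since $h'(0)=1$ we have $F_n\co{q_n}(w)=w+b_n(a)w^{q_n+1}+O(w^{q_n+2})$, so a Cauchy estimate on $\partial\mathbb D(0,\rho')$ yields $|b_n(a)|\le\|F_n\co{q_n}-\mathrm{id}\|_{C^0(\partial\mathbb D(0,\rho'))}(\rho')^{-q_n-1}$ and therefore $\limsup_n|b_n(a)|^{1/q_n}\le 1/\rho'$; letting $\rho'\uparrow r$ gives the claim.

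For the lower bound, suppose to the contrary that $|b_n(a)|\le R^{-q_n}$ for infinitely many $n$ and some $R>r_\theta(a)$. I claim this forces $\mathbb D(0,\rho)\subseteq K(f_{\la,a})$ (the filled Julia set) for every $\rho<R$. Granting the claim: $f_{\la,a}$ is a polynomial with a Siegel fixed point at $0$, so the connected component of $0$ in $\mathrm{int}\,K(f_{\la,a})$ is exactly $\Delta_\theta(a)$; hence $\mathbb D(0,\rho)\subseteq\Delta_\theta(a)$, and the Schwarz lemma applied to $h^{-1}|_{\mathbb D(0,\rho)}$ (which fixes $0$ with derivative $1$) gives $\rho\le r_\theta(a)$, so letting $\rho\uparrow R$ contradicts $R>r_\theta(a)$. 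To prove the claim I fix $\rho<\rho_{++}<R$ and argue that, for each of the infinitely many ``good'' indices $n$ (those with $|b_n(a)|\le R^{-q_n}$), the parabolic germ $g_n:=f_{\la_n,a}\co{q_n}=\mathrm{id}+b_n(a)z^{q_n+1}+\cdots$ has --- \emph{and this is the only nontrivial input} --- its Leau--Fatou flower valid throughout $\mathbb D(0,\rho_{++})$, with its $q_n$ attracting petals covering $\mathbb D(0,\rho)$ outside a union of sectors of total angular measure $\delta_n\to 0$. Consequently the attracting basin $A_n$ of $g_n$ contains $\mathbb D(0,\rho)$ minus a set of angular measure $\le\delta_n$, and $A_n\subseteq K(f_{\la_n,a})$ (the basin of a parabolic point lies in the filled Julia set). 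Since $K(\cdot)$ is upper semicontinuous along the parameter (a point escaping under $f_{\la,a}$ escapes under $f_{\la_n,a}$ for $n$ large), any point lying in $K(f_{\la_n,a})$ for infinitely many good $n$ lies in $K(f_{\la,a})$; and because the uncovered sectors shrink, the set of $z\in\mathbb D(0,\rho)$ that fail this has measure zero, so $\mathbb D(0,\rho)\subseteq K(f_{\la,a})$ because $K(f_{\la,a})$ is closed.

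The single genuinely hard step is the one flagged above: controlling $f_{\la_n,a}\co{q_n}$ on a disk of radius \emph{independent of $n$} as $q_n\to\infty$, i.e. showing that the leading nonlinear term $b_n(a)z^{q_n+1}$ dominates the entire higher-order tail of $f_{\la_n,a}\co{q_n}$ on a fixed disk whenever $|b_n(a)|$ is not too small --- equivalently, that the attracting petals of $f_{\la_n,a}\co{q_n}$ have conformal size comparable to $|b_n(a)|^{-1/q_n}$. This is a genuine near-parabolic estimate, presumably the technical core of \cite{Cheri2020}, and it is where bounded type enters (through the comparability $q_{n+1}\asymp q_n$ and an inductive control of the explosion of the parabolic cycles along the continued fraction). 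Finally, for the uniformity statement one runs both arguments with $a$ ranging over a compact subset of a non-degenerate parabolic locus $\Omega$: lower semicontinuity of $a\mapsto r_\theta(a)$ makes the radii in the upper bound uniform, the non-vanishing of $b_n$ on $\Omega$ for $n$ large lets the flower estimate be taken locally uniformly in $a$, and the upper semicontinuity of $a\mapsto K(f_{\la,a})$ is itself locally uniform.
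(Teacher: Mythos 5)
The paper does not actually prove the pointwise statement $\lim_n |b_n(a)|^{1/q_n}=1/r_\theta(a)$: it simply cites \cite{Cheri2020}. The only part established in the text is the \emph{uniformity} on compacts of a non-degenerate parabolic locus $\Omega$, and this is done by a short potential-theoretic observation: for $n$ large, $b_n$ is holomorphic and zero-free on $\Omega$, hence $q_n^{-1}\log|b_n|$ is harmonic there; by \cite[Prop.\ 54]{Cheri2020} these harmonic functions converge to $-\log r_\theta$ in $L^1_{\mathrm{loc}}(\Omega)$; and $L^1_{\mathrm{loc}}$ convergence of harmonic functions automatically upgrades to locally uniform convergence. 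That one-liner is the entire argument.

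Your attempt is a genuinely different route. The upper bound via conjugation by the linearising map, the telescoping estimate $\|F_n^{\circ q_n}-\mathrm{id}\|_{C^0(\partial \D(0,\rho'))}\to 0$ using $q_n|\lambda_n-\lambda|\to 0$, and the Cauchy estimate giving $\limsup |b_n(a)|^{1/q_n}\le 1/\rho'$ is correct and essentially standard. The lower bound by contradiction (small $|b_n(a)|$ forces large parabolic petals, hence $\D(0,\rho)\subset K(f_{\lambda,a})$, hence $r_\theta(a)\ge\rho$ by Schwarz) has sound logical architecture, including the upper semicontinuity of filled Julia sets and the measure-zero argument for the uncovered sectors; but, as you flag yourself, it hinges on a near-parabolic Leau--Fatou estimate on a disk of radius \emph{independent of $n$}, which is precisely the content you are deferring to \cite{Cheri2020}. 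There are also two secondary gaps: (a) for ``good'' indices with $b_n(a)=0$ the germ has higher parabolic multiplicity and your ``$q_n$ petals'' description needs modification, and (b) for the uniformity, a proof by contradiction produces a pointwise estimate and does not automatically quantify over a compact set in $a$; you would need an explicit, $a$-uniform version of the flower estimate, which you gesture at but do not supply. The paper's harmonicity trick sidesteps both issues entirely and is what should be emphasised: once $b_n$ is zero-free on $\Omega$ and $L^1_{\mathrm{loc}}$ convergence is known, local uniform convergence is automatic with no further dynamical input.
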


{ The first part of the above theorem is readily presented in \cite{Cheri2020}. 
The latter part also follows from that paper. 
Indeed, if $\Omega \subset \C$ is a non-degenerate parabolic locus, for large $n$, the sequence of maps $q_n^{-1} \log |b_n(a)|$ are harmonic on $\Omega$, 
and by \cite[Proposition 54]{Cheri2020}, converge to $-\log r_\theta(a)$ in $L^1_{loc}(\Omega)$. 
Then, the $L^1_{loc}$ convergence implies uniform convergence on compact sets for harmonic functions.}

\subsection{Parabolic rays converge to Siegel internal rays}
Let $U \subset \C$ be an arbitrary capture component. There exists a critical point $c_a$ and an integer 
$k \geq 1$ such that for all $a \in U$, $f_{{ \lam},a}^{\circ k}(c_a) \in \Delta_\theta(a)$. 
We let $U^*:=\{a \in U : f_{{ \lam},a}^{\circ k}(c_a) \neq 0  \}$. 
For $a \in U$, let $\phi_a : \Delta_\theta(a) \to \D_{r_\theta(a)}$ denote the linearising coordinate, 
normalised by $\phi_a'(0)=1$. 
{ We make use of the following lemma of Jellouli \cite{Jellouli94}, which is obtained through 
direct calculations, a compactness argument, and the bound $|\theta_0-p_n/q_n| \leq 1/q_n^2$.} 

\begin{lem}[Jellouli]\label{lem:jell}
{ For every $r_0>0$, there exists a constant $C>0$ such that for all $a \in U$, all $z \in \D(0,r_0r_\theta(a))$, all $n\geq 1$, and all 
$0 \leq k \leq q_n$:}
\[|\phi_a \circ f_{\la_n,a}^{\circ k} \circ \phi_a^{-1}(z) - \la_n^k z | \leq C k|z|/q_n^2.\]
\end{lem}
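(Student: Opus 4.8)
The plan is to reduce the statement to a single-step estimate for the conjugated map and then iterate. Write $g_n := \phi_a \circ f_{\la_n,a} \circ \phi_a^{-1}$, a holomorphic map defined on a disk $\D(0,r_1 r_\theta(a))$ with $r_1$ slightly larger than $r_0$ (the linearising coordinate $\phi_a$ is defined on all of $\Delta_\theta(a)$, so there is room). Since $f_{\la,a}$ is linearised by $\phi_a$ on the Siegel disk, $\phi_a \circ f_{\la,a}\circ \phi_a^{-1}(z) = \la z$ exactly; hence $g_n(z) - \la_n z = (\phi_a \circ f_{\la_n,a}\circ\phi_a^{-1} - \phi_a\circ f_{\la,a}\circ\phi_a^{-1})(z) + (\la - \la_n)z$. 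The first difference is controlled by $|\la_n - \la| = |e^{2i\pi p_n/q_n} - e^{2i\pi\theta}| \leq 2\pi|\theta - p_n/q_n| \leq 2\pi/q_n^2$ together with a Lipschitz bound for $a \mapsto f_{\la,a}$ composed with the (locally uniform) modulus of continuity of $\phi_a^{\pm 1}$; the second is bounded by the same quantity times $|z|$. So one gets $|g_n(z) - \la_n z| \leq C_0 |z|/q_n^2$ on $\D(0,r_1 r_\theta(a))$, for a constant $C_0$ depending only on $r_0$ (this is where the compactness argument and the explicit formula for $f_{\la_n,a}$ enter, so that $C_0$ is uniform in $a \in U$ — one uses that $U$ is bounded, that $r_\theta(a)$ is bounded below on $U$ since $\theta$ is Brjuno, and Koebe-type distortion estimates for $\phi_a$).

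With the one-step estimate in hand, the iteration is routine but must be done with care to control the orbit. Set $\eps_n := C_0/q_n^2$ and, for $z \in \D(0,r_0 r_\theta(a))$, let $z_0 = z$, $z_{j+1} = g_n(z_{j})$. The claim is $|z_j - \la_n^j z| \leq j\eps_n|z|\cdot(\text{something bounded})$ as long as the orbit stays in the domain of $g_n$. By induction: $|z_{j+1} - \la_n^{j+1} z| \leq |g_n(z_j) - \la_n z_j| + |\la_n|\,|z_j - \la_n^j z| \leq \eps_n |z_j| + |z_j - \la_n^j z|$, using $|\la_n| = 1$. Since $|z_j| \leq |z| + |z_j - \la_n^j z|$, and $|z_j - \la_n^j z| \leq j\eps_n|z|(1+o(1))$ stays small for $0 \leq j \leq q_n$ (because $q_n \eps_n = C_0/q_n \to 0$), the orbit remains in $\D(0, r_1 r_\theta(a))$, which justifies applying the one-step bound at each stage. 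Summing the geometric-type recursion gives $|z_k - \la_n^k z| \leq C k|z|/q_n^2$ for $0 \leq k \leq q_n$, which is exactly the assertion after renaming $g_n^{\circ k} = \phi_a \circ f_{\la_n,a}^{\circ k}\circ \phi_a^{-1}$.

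The main obstacle is making the constant $C$ genuinely uniform: uniform in $n$ (handled by $|\theta - p_n/q_n| \leq 1/q_n^2$ and the fact that the bad term $q_n \eps_n$ tends to $0$ rather than blowing up), but also uniform in $a \in U$ and in $k \leq q_n$. The uniformity in $a$ rests on a lower bound for the conformal radius $r_\theta(a)$ and on uniform control of the distortion of the linearising coordinates $\phi_a$ on compact subfamilies — i.e. one wants $\phi_a$ and $\phi_a^{-1}$ to have moduli of continuity that do not degenerate as $a$ varies over $U$, which follows from normality of the family $\{\phi_a\}$ together with $\phi_a'(0) = 1$. Once these normalisations are fixed, the one-step estimate and the termwise iteration are, as Jellouli's original argument shows, a direct computation.
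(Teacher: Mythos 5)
Your plan is correct and follows exactly the route the paper attributes to Jellouli (which the paper cites without reproducing): a one-step estimate driven by $|\la_n-\la|\le 2\pi|\theta-p_n/q_n|\le 2\pi/q_n^2$, uniformity in $a$ via compactness and distortion control of $\phi_a^{\pm 1}$, and a termwise iteration whose cumulative drift $O(|z|/q_n)$ keeps the orbit inside the Siegel disk for $k\le q_n$. One small slip worth fixing: the Lipschitz bound you invoke should be for $\la\mapsto f_{\la,a}$ (indeed $f_{\la_n,a}(w)-f_{\la,a}(w)=(\la_n-\la)w$), not for $a\mapsto f_{\la,a}$.
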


There is a holomorphic motion of the boundary of $\Delta_\theta(a)$ over $U$. 
The map $(a,z) \mapsto \phi_a(z)$ is holomorphic over $a \in U$ and $z \in \Delta_\theta(a)$, 
and the map $a \mapsto \log r_\theta(a)$ is harmonic on $U$,
{ see \cite{Sullivan83} or \cite{Zakeri16}}. 
In particular, there exists a holomorphic function $u : U \to \C$ such that $\re u = \log r_\theta$.
We let $v:=e^u$, so that $v$ is also holomorphic and $|v|=r_\theta$.

For $a \in U$ and $n \geq 1$, consider the map 
\[\tilde G_{n,a}=\phi_a \circ f_{\lam_n,a}^{\circ q_n} \circ \phi_{a}^{-1},\] 
which is defined on a neighbourhood of $0$. 
We also consider the normalised map  
\[G_{n,a}(z) := v(a)^{-1} \tilde G_{n,a}(v(a) z).\]
From \eqref{E:b_n-introduced}, it is not difficult to see that near $0$, 
\begin{equation}\label{E:G-expansion}
G_{n,a}(z)=z + b_n(a) v(a)^{\circ q_n} z^{q_{n+1}} + O(z^{q_n+2}).
\end{equation}

For any $r_1 \in (0,1)$, there exists $N \in \N$ such that for all $a \in U$ and all $n \geq N$, $G_{n,a}$ is 
defined and univalent on $\D(0, r_1)$. 
In particular, we may write 
\[G_{n,a}(z) = z e^{g_{n,a}(z)}\] 
for some function $g_{n,a} : \D(0,{ r_1}) \to \C$. 
Then, the map 
\[H_{n,a}(w):=w + g_{n,a}(e^w)\] 
is defined and univalent on the left half-plane $\re w < \log r_1$, and satisfies 
\[\exp \circ H_{n,a} = G_{n,a} \circ \exp.\] 

{ We do not know if $U$ is a non-degenerate parabolic locus. 
Let $\Omega \subseteq U^*$ be a simply connected non-degenerate parabolic locus. In particular, the convergence in Theroem~\ref{th:bn} 
is uniform on compact subsets of $\Omega$.} 

\begin{lem}\cite[Lem 4.2]{Cheri01}\label{lem:nofp0}
{ Fix an arbitrary $r_1 \in (0,1)$. For $n\geq 1$ and $a\in U$} let $N(n,a)$ denote the number of fixed points of $G_{n,a}$ in $\D(0,r_1)$.
Then, as $n \to \infty$, $N(n,a)/q_n$ converges to $0$ { uniformly on compact subsets of $\Omega$}. 
\end{lem}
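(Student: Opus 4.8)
The plan is to count fixed points of $G_{n,a}$ in $\D(0,r_1)$ by relating them, via the exponential change of coordinates, to fixed points of $H_{n,a}$ in the half-plane $\re w < \log r_1$, and then to apply an argument-principle estimate driven by the expansion \eqref{E:G-expansion}. A point $z \in \D(0,r_1) \setminus \{0\}$ is fixed by $G_{n,a}$ precisely when $g_{n,a}(z) \in 2\pi i \Z$; lifting through $\exp$, the fixed points of $G_{n,a}$ on an annulus correspond to the zeros of $w \mapsto g_{n,a}(e^w)$ in a vertical strip of height $2\pi$, counted with the $\Z$-worth of $2\pi i$-translates. So the first step is to establish the dictionary: $N(n,a)$ equals (up to the fixed point at $0$) the number of solutions of $g_{n,a}(z) \in 2\pi i \Z$, and to note that by \eqref{E:G-expansion} we have $g_{n,a}(z) = b_n(a)v(a)^{q_n} z^{q_n} + O(z^{q_n+1})$ near $0$, so near the origin the "dominant term" of $g_{n,a}$ is a monomial of degree $q_n$.

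Next I would set up the quantitative estimate. On $\Omega$ fixed compact $K$, Theorem~\ref{th:bn} gives $|b_n(a)|^{1/q_n} \to 1/r_\theta(a)$ uniformly, and since $|v(a)| = r_\theta(a)$ we get $|b_n(a) v(a)^{q_n}|^{1/q_n} \to 1$ uniformly on $K$. Thus the coefficient $\beta_n(a) := b_n(a)v(a)^{q_n}$ is, in modulus, subexponential in both directions: for any $\eps>0$, eventually $e^{-\eps q_n} \le |\beta_n(a)| \le e^{\eps q_n}$ on $K$. The key geometric point is that for $z$ of modulus $\rho$ with $\rho$ slightly less than $r_1$, the quantity $|\beta_n(a)|\rho^{q_n}$ is still extremely small once $n$ is large (because $r_1 < 1$ and the coefficient grows only subexponentially while $\rho^{q_n}$ decays exponentially), so $G_{n,a}$ is close to the identity on $\D(0,r_1)$ in a controlled way, and in particular $|g_{n,a}(z)| < 2\pi$ there once $n$ is large: hence the only fixed point contributing a value of $g_{n,a}$ in $2\pi i \Z$ near $z=0$ is $z=0$ itself. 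The honest count of nonzero fixed points then comes from counting solutions of $g_{n,a}(z) = 0$, which by the argument principle (Rouché against the leading monomial $\beta_n(a)z^{q_n}$ on a circle $|z| = \rho_0$ for a small fixed $\rho_0$ where the $O(z^{q_n+1})$ remainder is dominated) is exactly $q_n$ inside $\D(0,\rho_0)$ — but all these coincide with $z=0$ as a multiple root, so they contribute nothing to $N(n,a)$. Therefore $N(n,a)$ is bounded by the number of zeros of $g_{n,a}$ in the annulus $\rho_0 \le |z| < r_1$; this is the quantity I must bound by $o(q_n)$.

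For that annular count I would use Jensen's formula / the argument principle applied to $g_{n,a}$ on the annulus. The number of zeros of $g_{n,a}$ in $\{\rho_0 \le |z| \le \rho_1\}$ (with $\rho_0 < \rho_1 < r_1$ fixed) is controlled by $\frac{1}{2\pi}\big(\int_{|z|=\rho_1}\! - \int_{|z|=\rho_0}\big) d\arg g_{n,a}$, and by Jensen this is at most a constant times $\log \frac{\max_{|z|=\rho_1}|g_{n,a}|}{\min_{|z|=\rho_0}|g_{n,a}|}$-type quantity; more precisely, on $|z|=\rho_1$ we have $|g_{n,a}(z)| \le C |\beta_n(a)| \rho_1^{q_n} \le e^{\eps q_n}\rho_1^{q_n}$, while on the circle $|z|=\rho_0$ one has $|g_{n,a}(z)| \asymp |\beta_n(a)|\rho_0^{q_n} \ge e^{-\eps q_n}\rho_0^{q_n}$. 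Feeding this into the Jensen-type estimate for the number of zeros in the annulus gives a bound of the form $C' \log\!\big(e^{2\eps q_n}(\rho_1/\rho_0)^{q_n}\big) = C'(2\eps + \log(\rho_1/\rho_0)) q_n$; letting $\rho_1/\rho_0 \to 1$ and $\eps \to 0$ (which one is allowed to do since the $\rho_i$ and $\eps$ are ours to choose, after which $n$ is taken large) shows $N(n,a) = o(q_n)$ uniformly on $K$. I expect the main obstacle to be making the uniformity over $K \subset \Omega$ genuinely clean: one needs the Jensen estimate with constants independent of $a$, which requires the lower bound $\min_{|z|=\rho_0}|g_{n,a}(z)| \gtrsim |\beta_n(a)|\rho_0^{q_n}$ to hold uniformly — this in turn uses that the remainder $O(z^{q_n+1})$ in \eqref{E:G-expansion} is uniformly controlled on compacts of $\Omega$ (which follows from the uniform univalence of $G_{n,a}$ on $\D(0,r_1)$ and Cauchy estimates, i.e. a normal families argument) together with the uniform two-sided subexponential bound on $|\beta_n(a)|$ from Theorem~\ref{th:bn}. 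Once those uniform inputs are in hand, the counting is routine; this is exactly the structure of the argument in \cite[Lem 4.2]{Cheri01}, which we quote, so the role of the present lemma is just to record that the convergence is uniform on compact subsets of the non-degenerate parabolic locus $\Omega$, which is precisely where Theorem~\ref{th:bn}'s uniform convergence is invoked.
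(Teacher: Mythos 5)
Your overall strategy -- recast the count as a Jensen/Blaschke-type estimate on the number of zeros of $G_{n,a}-\mathrm{id}$, and feed in the subexponential behaviour of the leading coefficient $\beta_n(a)=b_n(a)v(a)^{q_n}$ coming from Theorem~\ref{th:bn} -- is indeed the engine of the paper's proof, and the preliminary reductions (that the only fixed points come from $g_{n,a}=0$ because $\sup|g_{n,a}|<2\pi$ once $n$ is large, and that the trivial zeros at the origin are harmless) are fine. But the specific Jensen estimate you set up on the annulus $\rho_0\leq|z|\leq\rho_1$ has a genuine gap on both circles. On the inner circle you assert a lower bound $|g_{n,a}(z)|\gtrsim|\beta_n(a)|\rho_0^{q_n}$, but there is no such bound: $g_{n,a}$ can have zeros on or arbitrarily close to $\{|z|=\rho_0\}$ -- indeed these are exactly the fixed points you are trying to count, so a lower bound there cannot be extracted from the expansion \eqref{E:G-expansion} without first controlling those very zeros (the argument becomes circular). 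On the outer circle, the claimed bound $|g_{n,a}(z)|\leq C|\beta_n(a)|\rho_1^{q_n}$ amounts to a uniform bound on $|h_{n,a}|$ on $\{|z|=\rho_1\}$, which is not established and in general does not hold; the only a priori bound available is the much weaker $\sup_{|z|\leq\rho_1}|g_{n,a}|\leq\eps_n$ from Lemma~\ref{lem:jell}. A Jensen count that pits an over-strong upper bound against an unavailable lower bound does not close.

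The fix runs the Jensen count with the origin as base point rather than with an inner circle: writing $g_{n,a}(z)=z^{q_n}\tilde g_{n,a}(z)$ one has $\tilde g_{n,a}(0)=\beta_n(a)$, and Jensen on $\D(0,\rho_1')$ for $\rho_1<\rho_1'<1$ gives the number of nonzero zeros in $\D(0,\rho_1)$ bounded by $\bigl(\log\max_{|z|=\rho_1'}|\tilde g_{n,a}|-\log|\beta_n(a)|\bigr)/\log(\rho_1'/\rho_1)$, with the max controlled by the Jellouli $\eps_n$. Dividing by $q_n$ and sending $n\to\infty$ then $\rho_1'\to1^-$ gives the claim. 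This is exactly what the paper's argument achieves, but packaged more cleanly: it rescales to a univalent $F_n$ on $\overline\D$ close to the identity, factors $F_n(z)-z=z^{q_n+1}\sigma_n(z)\prod_j\frac{z-\zeta_{j,n}}{1-\overline{\zeta_{j,n}}z}$, and reads off $|C_n|=|\sigma_n(0)|\prod_j|\zeta_{j,n}|\leq 2\,r^{N_n(r)}$ directly from the boundary value and the maximum principle -- a Blaschke-product form of Jensen with no annulus, no lower bound on inner circles, and no need to bound $h_{n,a}$ at all. I would strongly encourage you to rewrite your counting step along these lines.
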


\begin{proof}
{ Let $\Lambda$ be a compact set in $\Omega$.} 
For each $n\geq 1$, $N(n,a)$ is uniformly bounded from above { independent} of $a \in \overline{U}$. 
Let us choose $a_n \in \Lambda$ so that $N(n,a_n)= \max_{a \in \Lambda} N(n,a)$.
{ By Lemma~\ref{lem:jell},} there exists a sequence $r_n \to 1^-$ such that for every $n \in \N$, 
$G_{n,a_n}$ is defined and univalent on $\D(0,r_n)$, and $\sup_{|z| \leq r_n} |G_{n,a_n}(z)-z|  \to 0$. 
We consider the normalised map $F_n(z):=r_n^{-1} G_{n,a_n}(r_n z)$, so that for every $n \in \N^*$, 
$F_n$ is defined and univalent on $\D$, and $\sup_{|z| \leq 1} |F_n(z)-z| \to 0$.
In particular, we will always assume in what follows that $n$ is large enough so that 
\[\sup_{|z| \leq 1} |F_n(z)-z| \leq 2.\]
Moreover, we have
\[F_n(z) = z + C_n z^{q_n+1} + O(z^{q_n+2})\]
where $C_n:=b_n(a_n)v(a_n)^{q_n} r_n^{q_n}$. 
By Theorem \ref{th:bn} and the definition of $v$, we have
\begin{equation}\label{E:C_n-size}
\lim_{n \to + \infty} \frac{1}{q_n} \log |C_n| = 0.
\end{equation}
	
For each { $r \in (0,1]$,} let $N_n(r)$ denote the number of fixed points of $F_n$ in $\D(0,r) \setminus \{0\}$, 
counted with multiplicity. 
Let $\zeta_{n,j}$, for $1 \leq j \leq N_n(1)$, denote the fixed points of $F_n$ in $\D \setminus \{0\}$, repeated 
with multiplicity.
There exists a holomorphic function { $\sigma_n: \D \to \C$} with no zeroes in $\D$ such that for all $z \in \D$, 
\[F_n(z) = z + z^{q_n+1} { \sigma_n(z)} \prod_{j=1}^{N_n(1)} \frac{z - \zeta_{j,n}}{1 - \overline{\zeta_{j,n}}z}.\]
Clearly $\max_{|z|=1}|{ \sigma_n(z)}| \leq \max_{|z|=1} |F_n(z)-z| \leq 2$, thus by the maximum principle, $|{ \sigma_n(0)}| \leq 2$.   
Moreover, $C_n={ \sigma_n(0)} (-1)^{N_n(1)} \prod_{j=1}^{N_n(1)} \zeta_{j,n}$.
Let $r \in (r_1, 1 )$. 
Then
\[|C_n|= |{ \sigma}_n(0)| \prod_{|\zeta_{j,n}|<r } |\zeta_{j,n}| \prod_{|\zeta_{j,n}|\geq r } |\zeta_{j,n}| 
\leq 2 \cdot r^{N_n(r)} \cdot 1.\] 
Therefore, 
\[0 \leq \frac{N_n(r)}{q_n} \leq \frac{\log 2-\log |C_n| }{q_n|\log r|}.\]
{ Combining with \eqref{E:C_n-size}, we get} 
\[\limn \frac{N_n(r)}{q_n}=0.\]
Since $N_n(r) \geq N(n,a_n)$ for large enough $n$, we are done.
\end{proof}

\begin{lem}\label{L:no-fixed-points}
{ Let $r_1 \in (0,1)$, and $\Lambda$ be a compact set in $\Omega$.} 
There exists $N_1 \in \N$ such that for all $n \geq N_1$ and all $a \in \Lambda$, $G_{n,a}$ has no fixed points 
in $\D(0,r_1)$ except for $z=0$.
\end{lem}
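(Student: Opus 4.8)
The plan is to translate a nonzero fixed point of $G_{n,a}$ into a short periodic orbit of $f_{\lambda_n,a}$ lying well inside $\Delta_\theta(a)$, and then to run a dichotomy on the period of that orbit. Fix an auxiliary radius $r_1''\in(r_1,1)$. Suppose, towards a contradiction, that for some $a\in\Lambda$ and some large $n$ the map $G_{n,a}$ has a fixed point $\zeta_0\in\D(0,r_1)\setminus\{0\}$, and set $p:=\phi_a^{-1}(v(a)\zeta_0)\in\Delta_\theta(a)\setminus\{0\}$. Since $G_{n,a}(z)=v(a)^{-1}\phi_a\big(f_{\lambda_n,a}^{\circ q_n}(\phi_a^{-1}(v(a)z))\big)$, the relation $G_{n,a}(\zeta_0)=\zeta_0$ says exactly that $f_{\lambda_n,a}^{\circ q_n}(p)=p$, so $p$ is periodic for $f_{\lambda_n,a}$ with minimal period $m$ dividing $q_n$, $1\le m\le q_n$.

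In the case $m<q_n$ I would obtain a contradiction directly from Lemma~\ref{lem:jell} and elementary arithmetic. Applying Lemma~\ref{lem:jell} with $k=m$ at the point $v(a)\zeta_0$, which lies in $\D(0,r_1 r_\theta(a))$, gives $\big|\phi_a\big(f_{\lambda_n,a}^{\circ m}(p)\big)-\lambda_n^m v(a)\zeta_0\big|\le Cm\,|v(a)\zeta_0|/q_n^2$; since $f_{\lambda_n,a}^{\circ m}(p)=p$ and $\phi_a(p)=v(a)\zeta_0$, this reads $|1-\lambda_n^m|\le Cm/q_n^2$. On the other hand $m\mid q_n$ together with $\gcd(p_n,q_n)=1$ forces $\lambda_n^m=e^{2\pi i p_n/e}$ with $e:=q_n/m\ge 2$ and $\gcd(p_n,e)=1$, whence $|1-\lambda_n^m|=2|\sin(\pi p_n/e)|\ge 2\sin(\pi/e)\ge 4/e=4m/q_n$. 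Comparing the two bounds yields $q_n\le C/4$, which is impossible for $n$ large since $C$ depends only on $r_1$ and $q_n\to\infty$.

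In the case $m=q_n$ I would instead use Lemma~\ref{lem:nofp0}. First one checks that the orbit $p,f_{\lambda_n,a}(p),\dots,f_{\lambda_n,a}^{\circ(q_n-1)}(p)$ consists of $q_n$ distinct points: a coincidence $f_{\lambda_n,a}^{\circ i}(p)=f_{\lambda_n,a}^{\circ j}(p)$ with $0\le i<j<q_n$ would, after applying $f_{\lambda_n,a}^{\circ(q_n-i)}$ and using $f_{\lambda_n,a}^{\circ q_n}(p)=p$, give $f_{\lambda_n,a}^{\circ(j-i)}(p)=p$, contradicting minimality of the period. By Lemma~\ref{lem:jell} each of these points lies in $\phi_a^{-1}(\D(0,r_1'' r_\theta(a)))$ once $n$ is large, so the numbers $\zeta_j:=v(a)^{-1}\phi_a\big(f_{\lambda_n,a}^{\circ j}(p)\big)$, $0\le j<q_n$, are $q_n$ distinct points of $\D(0,r_1'')$; and each is fixed by $G_{n,a}$, since $G_{n,a}(\zeta_j)=v(a)^{-1}\phi_a\big(f_{\lambda_n,a}^{\circ(q_n+j)}(p)\big)=v(a)^{-1}\phi_a\big(f_{\lambda_n,a}^{\circ j}(p)\big)=\zeta_j$, every composition here being legitimate because the orbit never leaves $\Delta_\theta(a)$. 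Hence $G_{n,a}$ has at least $q_n$ fixed points in $\D(0,r_1'')$, which contradicts Lemma~\ref{lem:nofp0} applied with the radius $r_1''$: that lemma bounds the count by $o(q_n)$, hence by less than $q_n$, once $n$ is large, uniformly over $\Lambda$.

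The two cases are exhaustive, and in each of them the contradiction appears once $n$ passes a threshold that is uniform in $a\in\Lambda$ — the uniformity coming from the $a$-uniform constant in Lemma~\ref{lem:jell} and from the uniform-on-compact-subsets statement of Lemma~\ref{lem:nofp0}, together with compactness of $\Lambda$ — so that threshold serves as the required $N_1$. In my view the only genuine idea is the dichotomy itself: a stray fixed point of $G_{n,a}$ either produces a periodic orbit so short that Jellouli's near-rotation estimate is violated, or one of full length $q_n$, which at once yields $q_n$ fixed points and contradicts Lemma~\ref{lem:nofp0}; this is what lets one avoid any delicate estimate on the location of extraneous fixed points of $G_{n,a}$. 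The part that needs care rather than ideas is the bookkeeping ensuring that each composition $\phi_a\circ f_{\lambda_n,a}^{\circ k}\circ\phi_a^{-1}$ used above is genuinely defined on the disk where it is evaluated — which is precisely what Lemma~\ref{lem:jell} supplies.
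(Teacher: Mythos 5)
Your proposal is correct, and it takes a genuinely different route from the paper's. The paper gives a unified argument that never considers the minimal period of the stray fixed point: it sets $s:=\min(1,1/C)$, shows via the near-rotation estimate that the $L_{n,a}$-orbit points $L_{n,a}^{\circ k_1}(z)$ and $L_{n,a}^{\circ k_2}(z)$ are pairwise distinct for $1\le k_1<k_2\le s q_n$ (because $|\lambda_n^{k_1}-\lambda_n^{k_2}|\ge 4/q_n$ while the Jellouli error is $O(s q_n|z|/q_n^2)$), and then contradicts Lemma~\ref{lem:nofp0} once $N(n,a)<s q_n/2$. You instead split on the minimal period $m\mid q_n$ of the periodic point: when $m<q_n$ you exploit the \emph{sharper} arithmetic fact that for a proper divisor $m$ of $q_n$ one has $|1-\lambda_n^m|\ge 4m/q_n$ (not merely $\ge 4/q_n$), and compare this directly against the Jellouli bound $|1-\lambda_n^m|\le Cm/q_n^2$ to get $q_n\le C/4$ without invoking Lemma~\ref{lem:nofp0} at all; when $m=q_n$ you get $q_n$ distinct fixed points essentially for free and contradict Lemma~\ref{lem:nofp0}. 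The student's version buys a cleaner short-period case (no need to choose the threshold $s$ or to estimate distances between generic pairs $k_1<k_2$), at the modest cost of a two-case structure and one additional radius $r_1''$ to make the full-period case land inside a disk where Lemma~\ref{lem:nofp0} can be applied. Both arguments are uniform over $a\in\Lambda$ for the same reasons (uniform Jellouli constant and the uniform-on-compacts convergence in Lemma~\ref{lem:nofp0}), so the resulting threshold $N_1$ is legitimate.
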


\begin{proof}
By Lemma \ref{lem:jell}, there exists $C=C(r_1)>0$ such that { for all $n\geq 1$,} all $1\leq k \leq q_n$, all { $a \in \Lambda$} 
and all $|z| \leq r_1 r_\theta(a)$, we have 
\[| \phi_a \circ f_{\lam_n,a}^{\circ k} \circ \phi_a^{-1}(z)  { -\lam_n^k z} | \leq C k |z|/q_n^2.\]
Let us consider $L_{n,a}(z):=v(a)^{-1}\phi_a \circ f_{\lam_n,a} \circ \phi_a^{-1}\left( z v(a) \right)$. 
We have $L_{n,a}^{\circ q_n}=G_{n,a}$, and for all $n\geq 1$, all $1 \leq k \leq q_n$ and all $z \in \D(0,r_1)$, 
\[|L_{n,a}^{\circ k}(z) - \la_n^k z | \leq C k |z|/q_n^2.\]
Let $s:=\min(1, 1/C)>0$. 
By Lemma \ref{lem:nofp0}, there exists $N_1 \in \N$ such that for all $n \geq N_1$ and all { $a \in \Lambda$},
\begin{equation}\label{E:bound-by-ratio}
N(n,a) < s q_n/2.
\end{equation}
Assume for a contradiction that there exists $n \geq N_1$, { $a \in \Lambda$} and $z \in \D(0,r_1) \setminus \{0\}$ such that 
$G_{n,a}(z)=z$. 
Then for all $1 \leq k \leq q_n$, $L_{n,a}^{\circ k}(z)$ is also a fixed point of $G_{n,a}$.
{ We will show that the set $\{ L_{n,a}^{\circ k}(z)  : 1 \leq k \leq q_n  \}$ contains at least $sq_n$ elements, 
which contradicts \eqref{E:bound-by-ratio}.} 
	
First, observe that if $1 \leq k_1< k_2 \leq q_n$, then 
\[| \la_n^{k_1} - \la_n^{k_2}| \geq  4/q_n.\]
Then, for all $1 \leq k_1< k_2 \leq s q_n$:
\begin{align*}
|  L_{n,a}^{\circ k_1}(z) -  L_{n,a}^{\circ k_2}(z) | & \geq | \la_n^{k_1}z - \la_n^{k_2}z| 
- |L_{n,a}^{ \circ k_1}(z) - \la_n^{k_1} z | - |L_{n,a}^{\circ k_2}(z) - \la_n^{k_2} z | \\
&\geq \frac{4}{q_n} |z| - \frac{2C s q_n |z|}{q_n^2} \\
&\geq \frac{2|z|}{q_n}.
\end{align*}
In particular, $L_{n,a}^{\circ k_1}(z) \neq  L_{n,a}^{\circ k_2}(z)$. 
{ This completes the proof of the claim.}
\end{proof}

{ In the following lemma we assume that $r_1 \in (0,1)$, { $\Lambda \subset \Omega$} and $N_1 \in \mathbb{N}$ satisfy 
Lemma~\ref{L:no-fixed-points}.} 

\begin{lem}\label{lem:nofp}
{ For all $r_1 \in (0,1)$, all $n\geq N_1$ and all { $a \in \Lambda$}, on the left half-plane $\re w < \log r_1$}, we may write 	
\[g_{n,a}(e^w) = e^{\ell_{n,a}(w)}\]
where 
\[\ell_{n,a}(w) = \log b_n(a)+ q_n u(a)+ q_n w + k_{n,a}(e^w)\]
{ for some holomorphic function $k_{n,a}: \mathbb{D}(0,r_1) \to \mathbb{C}$ satisfying $k_{n,a}(0)=0$.} 
\end{lem}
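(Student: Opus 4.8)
The plan is to unpack the definitions layer by layer, starting from the expansion \eqref{E:G-expansion} and working back up through the chain $G_{n,a}(z) = z e^{g_{n,a}(z)}$ and $H_{n,a}(w) = w + g_{n,a}(e^w)$. First I would observe that on $\D(0,r_1)$ the function $G_{n,a}$ is univalent (for $n \geq N$) and, by Lemma~\ref{L:no-fixed-points}, has no fixed point other than $z = 0$ when $n \geq N_1$ and $a \in \Lambda$; hence $G_{n,a}(z)/z$ is a non-vanishing holomorphic function on $\D(0,r_1) \setminus \{0\}$ which extends holomorphically and without zeros across $0$ (its value there is $G_{n,a}'(0) = 1$). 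Therefore $g_{n,a}(z) = \log(G_{n,a}(z)/z)$ is a well-defined holomorphic function on $\D(0,r_1)$ once we fix the branch by $g_{n,a}(0) = 0$, and from \eqref{E:G-expansion} we read off $g_{n,a}(z) = b_n(a) v(a)^{q_n} z^{q_n} + O(z^{q_n+1})$.

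Next I would argue that $g_{n,a}$ itself has no zero on $\D(0,r_1) \setminus \{0\}$: a zero of $g_{n,a}$ at some $z_0 \neq 0$ would force $G_{n,a}(z_0) = z_0$, contradicting Lemma~\ref{L:no-fixed-points}. So $g_{n,a}(z) = z^{q_n} \cdot \Psi_{n,a}(z)$ where $\Psi_{n,a}$ is holomorphic and non-vanishing on $\D(0,r_1)$, with $\Psi_{n,a}(0) = b_n(a) v(a)^{q_n}$, which is non-zero since $a \in \Omega$ (a non-degenerate parabolic locus) and $v$ is non-vanishing. Consequently $g_{n,a}(e^w) = e^{q_n w}\Psi_{n,a}(e^w)$ for $\re w < \log r_1$, and since $\Psi_{n,a}$ is non-vanishing on the simply connected disk $\D(0,r_1)$ we may write $\Psi_{n,a}(\zeta) = \exp(\psi_{n,a}(\zeta))$ with $\psi_{n,a}$ holomorphic on $\D(0,r_1)$; normalising the branch so that $\psi_{n,a}(0) = \log b_n(a) + q_n u(a)$ (using $v = e^u$, so $v^{q_n} = e^{q_n u}$), we set $k_{n,a}(\zeta) := \psi_{n,a}(\zeta) - \log b_n(a) - q_n u(a)$, which is holomorphic on $\D(0,r_1)$ with $k_{n,a}(0) = 0$. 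Then $g_{n,a}(e^w) = \exp\!\big(q_n w + \log b_n(a) + q_n u(a) + k_{n,a}(e^w)\big)$, which is exactly the claimed identity with $\ell_{n,a}(w) = \log b_n(a) + q_n u(a) + q_n w + k_{n,a}(e^w)$.

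I do not expect any serious obstacle here; this lemma is essentially a bookkeeping exercise in taking logarithms of non-vanishing holomorphic functions, and the only substantive input is the absence of fixed points of $G_{n,a}$ in $\D(0,r_1) \setminus \{0\}$ (Lemma~\ref{L:no-fixed-points}) together with the non-degeneracy of $\Omega$ guaranteeing $b_n(a) \neq 0$. The one point requiring a little care is the consistent choice of branches: the branch of $\log b_n(a)$ must be the one already implicitly fixed in the statement, and $u$ is a fixed holomorphic branch of $\log r_\theta$ on $U$, so that $q_n u(a)$ is unambiguous; then $k_{n,a}$ is forced by the normalisation $k_{n,a}(0) = 0$, and one should note that changing the branch of $\psi_{n,a}$ by $2\pi i \Z$ merely shifts the additive constant, so the decomposition is consistent with a fixed choice. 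Strictly speaking the branches may depend on $n$ and $a$, but since the lemma is stated pointwise in $(n,a)$ this causes no difficulty; any later uniform statement over $\Lambda$ will be handled separately.
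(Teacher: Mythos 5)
Your proof is correct and follows essentially the same route as the paper: factor $g_{n,a}(z)$ as $z^{q_n}$ times a holomorphic factor whose non-vanishing on $\D(0,r_1)$ follows from Lemma~\ref{L:no-fixed-points} (fixed points of $G_{n,a}$ correspond to zeros of $g_{n,a}$), then take a logarithm and normalise by $k_{n,a}(0)=0$. The only cosmetic difference is that the paper normalises the non-vanishing factor $h_{n,a}$ so that $h_{n,a}(0)=1$, whereas you keep $\Psi_{n,a}(0)=b_n(a)v(a)^{q_n}$ and subtract the constant afterwards; the branch discussion you add is a fair caveat but no more than what the paper already notes before the lemma.
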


Note that $\log b_n(a)$ is well-defined on $\Omega$ for large enough $n$, { because $\Omega$ is simply connected and $b_n(a)$ 
becomes non-zero on $\Omega$.} 

\begin{proof}
Recall that by definition, we have $g_{n,a}(z)=\log \frac{G_{n,a}(z)}{z}$, well-defined 
on $\D(0,r_1)$ for all $n \geq N_1$ and { $a \in \Lambda$}. 
Moreover, { by \eqref{E:G-expansion} and the definition of $v$, we have} 
\[G_{n,a}(z)=z+ b_n (a) e^{q_n u(a)} z^{q_n +1} + O(z^{q_n+2}).\] 
Therefore, 
\begin{equation}\label{E:g_n-expansion}
g_{n,a}(z) = \log \left( 1 + b_n(a) e^{q_n u(a)} z^{q_n} + O(z^{q_n+1})\right) 
= b_n(a) e^{q_n u(a)} z^{q_n} h_{n,a}(z)
\end{equation}
for some function $h_{n,a}: \D(0,{ r_1}) \to \C$ such that $h_{n,a}(0)=1$. 
Moreover, because by Lemma~\ref{lem:nofp}, $G_{n,a}$ has no fixed points inside $\D(0,{ r_1}) \setminus \{0\}$, 
$h_{n,a}$ does not vanish on $\D(0,{ r_1})$. 
Thus, $h_{n,a}(z)=\exp \circ k_{n,a}(z)$ for some function $k_{n,a} : \D(0,{ r_1}) \to \mathbb{C}$ 
with $k_{n,a}(0)=0$. 
\end{proof}

\begin{lem}\label{L:k_n-behaviour}
{ For every $r_0 \in (0,1)$ and every compact set $\Lambda \subset \Omega$}, we have 
\[\limn \sup _{a \in { \Lambda}} \sup_{|z| \leq r_0} \frac{\re k_{n,a}(z)}{q_n} = 0, \qquad \text{and} \qquad 
 \limn \sup _{a \in { \Lambda}} \sup_{|z| \leq r_0} \left | \frac{k_{n,a}'(z)}{q_n} \right |= 0.\]   
\end{lem}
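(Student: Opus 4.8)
The plan is to control $k_{n,a}$ on a slightly larger disk than $\D(0,r_0)$ by combining the algebraic identity from Lemma~\ref{lem:nofp} with the \emph{a priori} smallness of $g_{n,a}$ coming from Jellouli's estimate (Lemma~\ref{lem:jell}), and then to pass from a bound on $\re k_{n,a}$ to a bound on $k_{n,a}'$ by Cauchy's estimate / the Borel--Carathéodory inequality. Concretely, fix $r_0 \in (0,1)$ and pick $r_1 \in (r_0,1)$; throughout we assume $n \geq N_1$ so that Lemma~\ref{lem:nofp} applies on $\D(0,r_1)$ for all $a \in \Lambda$. Recall from \eqref{E:g_n-expansion} that $g_{n,a}(z) = b_n(a) e^{q_n u(a)} z^{q_n} e^{k_{n,a}(z)}$, hence for $0 < |z| \le r_1$,
\[
\re k_{n,a}(z) = \log \left| \frac{g_{n,a}(z)}{z^{q_n}} \right| - \log|b_n(a)| - q_n \re u(a) = \log|g_{n,a}(z)| - q_n \log|z| - \log|b_n(a)| - q_n \log r_\theta(a),
\]
using $\re u = \log r_\theta$. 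The key point is that by Theorem~\ref{th:bn}, $\tfrac{1}{q_n}\log|b_n(a)| + \log r_\theta(a) \to 0$ uniformly on $\Lambda$ (as $\Lambda \subset \Omega$ and $\Omega$ is a non-degenerate parabolic locus), so the last two terms contribute $o(q_n)$ uniformly.

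Next I would bound $\log|g_{n,a}(z)|$ from above and below on the circle $|z| = r_1$. From Lemma~\ref{lem:jell} (applied with $k = q_n$, after conjugating by $v(a)$ exactly as in the proof of Lemma~\ref{L:no-fixed-points}), $\sup_{|z| \le r_1}|G_{n,a}(z) - z|$ tends to $0$ uniformly on $\Lambda$; since $g_{n,a}(z) = \log(G_{n,a}(z)/z)$, this gives $\sup_{|z| \le r_1} |g_{n,a}(z)| \to 0$ uniformly, in particular $\log|g_{n,a}(z)| \le 0$ for large $n$, which together with the previous display yields the upper bound $\sup_{a \in \Lambda}\sup_{|z| = r_1} \re k_{n,a}(z) = o(q_n)$. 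For the lower bound on $\log|g_{n,a}(z)|$ I would instead use the factored form of $g_{n,a}$: since $g_{n,a}$ has a zero of order exactly $q_n$ at $0$ and, by Lemma~\ref{L:no-fixed-points}, no other zeros in $\D(0,r_1)$, the function $k_{n,a} = \log(g_{n,a}(z)/(b_n(a)e^{q_n u(a)} z^{q_n}))$ is holomorphic on $\D(0,r_1)$ with $k_{n,a}(0) = 0$; therefore $\re k_{n,a}$ is harmonic, and an upper bound on a circle forces a two-sided bound on a smaller disk. Precisely, apply the Harnack-type inequality (or Borel--Carathéodory): if $u$ is harmonic on $\D(0,r_1)$ with $u(0) = 0$ and $\sup_{|z|=r_1} u(z) \le M_n$, then $\sup_{|z| \le r_0}|u(z)| \le \tfrac{2 r_0}{r_1 - r_0} M_n$ and $\sup_{|z| \le r_0}|k_{n,a}'(z)| \le \tfrac{C(r_0,r_1)}{1}M_n$ for a constant depending only on $r_0, r_1$. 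With $M_n = o(q_n)$ uniformly on $\Lambda$, dividing by $q_n$ gives both assertions.

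The main obstacle is getting the \emph{uniform} (in $a \in \Lambda$ and $z$ in the circle) upper bound on $\re k_{n,a}$, i.e.\ reconciling the upper bound $\log|g_{n,a}| \le 0$ with the possibility that $g_{n,a}$ is much smaller than its ``expected'' size $|b_n(a)|\,r_\theta(a)^{q_n}\,|z|^{q_n}$ on part of the circle — a priori that only helps (it makes $\re k_{n,a}$ more negative), so the real content is ruling out $\re k_{n,a}$ being large \emph{positive}, which is exactly what the crude bound $|g_{n,a}(z)| \le 1$ on $|z| = r_1$ gives once combined with $|z|^{-q_n} = r_1^{-q_n} = e^{q_n \log(1/r_1)}$ and Theorem~\ref{th:bn}: the surviving term is $q_n(\log(1/r_1) - \log r_\theta(a)) + o(q_n)$, which is \emph{not} $o(q_n)$ in general. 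To fix this one must not fix $r_1$ but instead exploit the full strength of Lemma~\ref{lem:jell}: as in the proof of Lemma~\ref{lem:nofp0}, choose $r_n \to 1^-$ with $\sup_{|z| \le r_n}|G_{n,a}(z) - z| \to 0$ uniformly on $\Lambda$, so that $g_{n,a}$ is defined and uniformly small on $\D(0,r_n)$; then evaluate $\re k_{n,a}$ on $|z| = r_n$, where the term $-q_n\log|z| = -q_n\log r_n = o(q_n)$ (since $q_n \log r_n = q_n \log(1 - (1-r_n)) \sim -q_n(1-r_n)$ and one checks $q_n(1-r_n) \to 0$ from the Jellouli bound, which gives $1 - r_n = O(1/q_n)$ or smaller). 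Finally, since eventually $r_n > r_1 > r_0$, Lemma~\ref{L:no-fixed-points} guarantees $k_{n,a}$ is holomorphic with $k_{n,a}(0)=0$ on $\D(0,r_n) \supset \D(0,r_1)$, and applying the Borel--Carathéodory estimate from $\D(0,r_1)$ down to $\D(0,r_0)$ with the uniform bound $M_n = \sup_{a\in\Lambda}\sup_{|z|=r_1}\re k_{n,a}(z) = o(q_n)$ completes both limits after division by $q_n$.
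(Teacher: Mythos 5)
Your overall strategy matches the paper's: extract $\re k_{n,a}$ from the factorization
\[
g_{n,a}(z) = b_n(a)\, e^{q_n u(a)}\, z^{q_n}\, e^{k_{n,a}(z)},
\]
bound $|g_{n,a}|$ on a circle of radius close to $1$ via Jellouli's estimate, cancel the $\log|b_n(a)| + q_n\log r_\theta(a)$ contribution using Theorem~\ref{th:bn}, and then pass from a one--sided bound on $\re k_{n,a}$ with $k_{n,a}(0)=0$ to a derivative bound via Schwarz/Borel--Carath\'eodory. You have also correctly identified the crucial obstacle: a \emph{fixed} circle of radius $r_1 < 1$ leaves a persistent term of order $q_n\log(1/r_1)$, which is not $o(q_n)$. (Your displayed ``surviving term'' $q_n(\log(1/r_1) - \log r_\theta(a))$ has a spurious $-\log r_\theta(a)$; Theorem~\ref{th:bn} cancels that against $-\frac{1}{q_n}\log|b_n|$, and the true leftover is just $q_n\log(1/r_1)$, but your diagnosis of the problem is right.)

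The one point where your argument is not quite right is the way you handle the vanishing of that leftover term. You want $-q_n\log r_n = o(q_n)$, which only requires $r_n \to 1^-$; but you invoke the much stronger $q_n(1-r_n)\to 0$ and attribute it to ``the Jellouli bound, which gives $1-r_n = O(1/q_n)$.'' Lemma~\ref{lem:jell} does not prescribe any sequence $r_n$: it gives, for each \emph{fixed} radius, a constant $C$ depending on that radius, and nothing is said about how $C$ behaves as the radius approaches $1$. So the claim $1-r_n = O(1/q_n)$ is unsupported, and is in fact unnecessary: you only need $r_n \to 1$, and simultaneously $\frac{1}{q_n}\log\eps_n \to 0$ where $\eps_n$ is measured at radius $r_n$. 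The paper sidesteps this bookkeeping entirely by fixing $r_1 = r_1(\eps)$ with $-\log r_1 < \eps/2$ \emph{in advance}, so that only one fixed radius is used at a time, $\eps_n \le C(r_1)/q_n$ holds with a fixed constant, and one obtains $\sup_{|z|\le r_0} \re k_{n,a}(z)/q_n \le \eps$ for $n$ large depending on $\eps$, which is exactly the claimed limit; no diagonal choice of $r_n$ is needed. Replace your ``$q_n(1-r_n)\to 0$'' step with this $\eps$-managed version (or simply note that $r_n\to 1$ suffices and is already provided in the proof of Lemma~\ref{lem:nofp0}), and the rest of your argument --- in particular the final Borel--Carath\'eodory step from radius $r_1$ to $r_0$ giving the derivative bound --- goes through exactly as in the paper.
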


\begin{proof}
Let us fix arbitrary $r_0 \in (0,1)$, { compact set $\Lambda \subset \Omega$} and $\eps>0$. 
We choose $r_1 \in (r_0,1)$ such that $- \log r_1 < \eps/2$.
By Lemma \ref{lem:jell}, if we let
\[\eps_n:= \sup_{a \in U} \sup_{|z|\leq r_1} |G_{n,a}(z)-z|, \]
then we have $\eps_n \leq C(r_1)/q_n$, and hence $\eps_n \to 0$. 
	
It follows { from \eqref{E:g_n-expansion}} that for all $(z,a) \in \D(0,r_1) \times { \Lambda}$, we have 
\[|b_n(a) e^{q_n u(a)} z^{q_n+1} e^{k_{n,a}(z)}  | \leq \eps_n.\]
Let $(y_n,a_n) \in \overline{\D}(0,r_1) \times { \Lambda}$ be such that
\[| e^{k_{n,a_n}(y_n)}  |  = \max_{|z| \leq r_0, a \in { \Lambda}} | e^{k_{n,a}(z)}  |.\]
By the maximum principle, we have $|y_n|=r_1$ and $a_n \in \partial U$. Therefore, 
\begin{align*}
r_\theta(a_n)^{q_n} |b_n(a_n)| r_1^{{ q_n}} e^{\re k_{n,a_n}(y_n)}  & \leq \eps_n/r_1,
\end{align*}
using ${ |}e^{q_n u(a)} { |} = r_\theta(a)^{q_n}$.
Applying $\frac{1}{q_n} \log$ to both sides of the above inequality, we obtain 
\[{ \log }  r_\theta(a_n)+\frac{1}{q_n} \log |b_n(a_n)| + \log r_1 + \frac{\re k_{n,a_n}(y_n)}{q_n} \leq \frac{1}{q_n} \log  \frac{\eps_n}{r_1}\]
and hence 
\begin{align*}
\frac{\re k_{n,a_n}(y_n)}{q_n} 
&\leq \frac{1}{q_n} \log \frac{\eps_n}{r_1} - \log r_1 - \frac{1}{q_n} \log |b_n(a_n)| - \log r_\theta(a_n).
\end{align*}
{ Using $\eps_n \leq C(r_1)/q_n$,} we have $\limsup_{n \to + \infty} \frac{1}{q_n} \log \frac{\eps_n}{r_1} \leq 0$, 
and by Theorem \ref{th:bn}, 
\[\limn  \frac{1}{q_n} \log |b_n(a_n)| - \log r_\theta(a_n) = 0.\]
Therefore, there exists $N \in \N$ such that for all $n \geq N$, 
\[\frac{1}{q_n} \log |b_n(a_n)| - \log r_\theta(a_n)  + \frac{1}{q_n} \log \frac{\eps_n}{r_1} \leq \frac{\eps}{2}.\]
Then, by our choice of $r_1$, for all $n \geq N$,  
\[\frac{1}{q_n} \re k_{n,a_n}(y_n) \leq - \log r_1 + \frac{\eps}{2} \leq \eps.\] 

{ On the other hand, $k_{n,a}(0)=0$ for all $n \in \N$ and $a \in { \Lambda}$.}
Combining with the above argument, we conclude that for all $n\geq N$, 
\[0 \leq \sup _{a \in { \Lambda}} \sup_{|z| \leq r_0} \frac{\re k_{n,a}(z)}{q_n}  \leq   \frac{1}{q_n} \re k_{n,a_n}(y_n) \leq \eps\]

{ The first inequality implies the second inequality, as a general property. 
Fix arbitrary $r_0$, $r_1 \in (r_0, 1)$ and $\epsilon>0$. 
By the first inequality, there is $N \in \N$ such that for all $n \geq N$ all $a \in \Lambda$ and all $|z| < r_1$, 
$\re k_{n,a}(z)/(\epsilon q_n) < 1$. Thus, $k_{n, a}/(\epsilon q_n)$ maps the disk $\D(0, r_1)$ into the left 
half-plane $\re w < 1$, with $k_{n,a}(0)/(\epsilon q_n)=0$. 
Post-composing the map $k_{n,a}/(\epsilon q_n)$ with a M\"obius transformation and applying the Schwarz lemma, 
we conclude that there is a constant $C$, depending only on $r_1$ but not on $n$, $\epsilon$ and $a$, 
such that on the disk $\D(0, r_0)$, $|k'_{n,a}(z)/(\epsilon q_n)| \leq C$.
As $\epsilon$ was arbitrary, this implies the second inequality.} 	
\end{proof}

\begin{prop}\label{P:bands-in-petals}
{ For every $r_0 \in (0,1)$ and every compact set $\Lambda \subset \Omega$,} there exists $N \in \N$ such that for all $n \geq N$, 
all $a \in { \Lambda}$, { and all $k \in \mathbb{Z}$,} 
the regions 
\[C_{n,a}^k
=\{ w \in \C : \re w<\log r_0 \text{ and }  2k\pi+3\pi/4 < \im \ell_{n,a}(w) < 2k\pi + 5\pi/4\}\]
are forward invariant by $H_{n,a}$, and for every $w \in C_{n,a}^k$, $\re H_{n,k}^{\circ m} (w) \to -\infty$ as $m \to +\infty$. 
\end{prop}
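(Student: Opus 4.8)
The plan is to exploit the explicit formula for $\ell_{n,a}$ from Lemma~\ref{lem:nofp}, namely
\[\ell_{n,a}(w) = \log b_n(a)+ q_n u(a)+ q_n w + k_{n,a}(e^w),\]
together with the functional equation $\exp \circ H_{n,a} = G_{n,a} \circ \exp$ and the definition $G_{n,a}(z)=z e^{g_{n,a}(z)}$, $g_{n,a}(e^w)=e^{\ell_{n,a}(w)}$, which give
\[H_{n,a}(w) = w + g_{n,a}(e^w) = w + e^{\ell_{n,a}(w)}.\]
So the dynamics of $H_{n,a}$ is a perturbation of translation, with the perturbation equal to the exponential of $\ell_{n,a}$. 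The sets $C_{n,a}^k$ are precisely the regions where $\im \ell_{n,a}(w)$ lies in a window of width $\pi/2$ centred at $2k\pi+\pi$, i.e.\ where $e^{\ell_{n,a}(w)}$ points into the left half-plane with real part dominating, since on such a window $\re e^{\ell_{n,a}(w)} = |e^{\ell_{n,a}(w)}|\cos(\im \ell_{n,a}(w)) \leq -\tfrac{1}{\sqrt 2}|e^{\ell_{n,a}(w)}| < 0$ and $|\im e^{\ell_{n,a}(w)}| \leq \tfrac{1}{\sqrt 2}|e^{\ell_{n,a}(w)}| \leq -\re e^{\ell_{n,a}(w)}$. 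Thus a single application of $H_{n,a}$ strictly decreases $\re w$, and the only thing to check is that the imaginary part of $\ell_{n,a}$ does not drift out of its window under iteration, so that the bands are genuinely forward invariant.

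The key quantitative input is Lemma~\ref{L:k_n-behaviour}: on $\overline{\D}(0,r_0)$ one has $\re k_{n,a}(z)/q_n \to 0$ and $k_{n,a}'(z)/q_n \to 0$ uniformly in $a\in\Lambda$. First I would compute the differential of $\im \ell_{n,a}$ along the $H_{n,a}$-orbit. Writing $\ell = \ell_{n,a}$, we have $\ell'(w) = q_n + e^w k_{n,a}'(e^w)$, so by Lemma~\ref{L:k_n-behaviour}, for $n$ large and $\re w < \log r_0$, $\ell'(w) = q_n(1+o(1))$ with the $o(1)$ uniform; in particular $|\ell'(w)| \asymp q_n$ and $\arg \ell'(w) \to 0$. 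The change in $\ell$ across one step is $\ell(H_{n,a}(w)) - \ell(w) = \int_0^1 \ell'(w + t\,e^{\ell(w)})\, e^{\ell(w)}\, dt$; since the segment from $w$ to $H_{n,a}(w)=w+e^{\ell(w)}$ stays in the half-plane $\re<\log r_0$ (as $\re e^{\ell(w)}<0$ on the band), the bound on $\ell'$ applies throughout, giving
\[\bigl|\ell(H_{n,a}(w)) - \ell(w) - q_n e^{\ell(w)}\bigr| \leq o(q_n)\,|e^{\ell(w)}|,\]
hence $\ell(H_{n,a}(w)) - \ell(w) = q_n e^{\ell(w)}(1+o(1))$, which again points essentially into the negative real direction; in particular its imaginary part is $o(q_n |e^{\ell(w)}|)$ — small compared to its real part — so $\im\ell$ barely moves while $\re \ell$ decreases. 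This should let me show that once $w\in C_{n,a}^k$, the whole forward orbit stays in $C_{n,a}^k$: $\re \ell_{n,a}$ along the orbit is decreasing, so $|e^{\ell_{n,a}}|$ is summable geometrically (each step multiplies roughly by $\exp(-c\,q_n|e^{\ell}|)<1$ or, more crudely, $\re\ell$ drops by a definite amount initially and then by a geometric tail), and therefore the total variation of $\im\ell_{n,a}$ along the orbit is bounded by $o(q_n)\sum_m |e^{\ell(w_m)}|$, which I will arrange to be less than $\pi/4$ by taking $n$ large.

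For the escape statement $\re H_{n,a}^{\circ m}(w)\to -\infty$: once invariance is established, at each step $\re w_{m+1} = \re w_m + \re e^{\ell(w_m)} \leq \re w_m - \tfrac{1}{\sqrt2}|e^{\ell(w_m)}| < \re w_m$, so $(\re w_m)$ is strictly decreasing; if it were bounded below, the $w_m$ would remain in a vertical strip $\{\,\re_0 \leq \re w < \log r_0,\ \im\ell\in\text{window}\,\}$, on which $|e^{\ell_{n,a}(w)}| = |b_n(a)|\,r_\theta(a)^{q_n} e^{q_n\re w + \re k_{n,a}(e^w)}$ is bounded below by a positive constant (using $\tfrac1{q_n}\log|b_n(a)| \to -\log r_\theta(a)$ from Theorem~\ref{th:bn} and $\re k_{n,a}(e^w) = o(q_n)$, the exponent is $q_n\re w + o(q_n) + q_n\log r_\theta(a) - q_n\log r_\theta(a)\cdot(1+o(1))$... concretely the exponential tends to a fixed positive size on compacta), contradicting summability of $|e^{\ell(w_m)}|$. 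Hence $\re w_m \to -\infty$.

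I expect the main obstacle to be bookkeeping the uniformity: all estimates must hold simultaneously for all $a\in\Lambda$, all $k\in\Z$, and all $w$ in the (unbounded) band, and the naive bound on the total imaginary drift involves $\sum_m |e^{\ell(w_m)}|$ which is only controlled once we already know $\re\ell$ is decreasing — so the invariance and the drift bound must be proved together by induction on $m$. The clean way is: fix $n$ large enough that $|\arg\ell'| < \pi/8$ and $|e^{\ell'(w)}/q_n - 1|$ is tiny on the band (from Lemma~\ref{L:k_n-behaviour}), then show inductively that $w_m\in C_{n,a}^k$ and $|\im\ell(w_m) - \im\ell(w_0)| < \pi/8$, using that each increment of $\im\ell$ is bounded by a constant times $q_n|e^{\ell(w_m)}|$ and that the $|e^{\ell(w_m)}|$ form a geometric-type sequence summing to something we make $\leq \pi/(8\cdot\text{const}\cdot q_n)$ by choosing $n$ large — here the factor $q_n$ cancels against $|e^{\ell(w_m)}| \lesssim |e^{\ell(w_0)}|/q_n$-type bounds coming from the definition of the band and Lemma~\ref{lem:jell}'s $\eps_n \leq C/q_n$, which bounds $|e^{\ell(w)}| = |g_{n,a}(e^w)| \leq C/q_n$ on $\re w < \log r_1$. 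That last point — that $|e^{\ell_{n,a}(w)}|$ is uniformly $O(1/q_n)$ on the band, from Lemma~\ref{lem:jell} — is what makes the imaginary drift $O(\sum_m q_n\cdot(1/q_n)) $... no: it makes the drift $\sum_m |\ell(w_{m+1})-\ell(w_m)| \lesssim \sum_m q_n|e^{\ell(w_m)}|$ with $|e^{\ell(w_m)}|$ geometrically decaying from a first term $O(1/q_n)$, so the sum is $O(1)$ uniformly, and then the geometric ratio being close to $1$ is the delicate part that forces $n$ large. I would organize the write-up around this single induction.
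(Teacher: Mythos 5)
Your entry point matches the paper — iterate $H_{n,a}(w)=w+e^{\ell_{n,a}(w)}$, treat $\ell_{n,a}$ as approximately $q_n$ times a translation via Lemmas~\ref{lem:nofp} and~\ref{L:k_n-behaviour}, and bound the step size by Lemma~\ref{lem:jell} — but the central estimate in your invariance argument is wrong. You claim the per-step change $\ell(H(w))-\ell(w)\approx q_n e^{\ell(w)}$ has imaginary part $o(q_n|e^{\ell(w)}|)$, which would require $|\sin(\im\ell(w))|=o(1)$; on $C^k_{n,a}$ one only has $\im\ell\in(\pi-\pi/4,\pi+\pi/4)+2k\pi$, so $|\sin(\im\ell)|$ may reach $1/\sqrt2$ and the imaginary increment is genuinely of order $q_n|e^{\ell}|$, not smaller. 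Your fallback — that the increments are a constant times $q_n|e^{\ell(w_m)}|$ and that $|e^{\ell(w_m)}|$ decays geometrically — also fails: setting $t_m:=q_n|e^{\ell(w_m)}|$, the model iteration is the Leau--Fatou map $t_{m+1}\approx t_m e^{-ct_m}$, so $t_m\sim \mathrm{const}/m$ and $\sum_m t_m$ diverges like $\log m$. The cumulative unsigned imaginary drift is infinite, independently of how large $n$ is, so the proposed induction (invariance together with a total-drift bound $<\pi/8$) cannot close.

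The correct mechanism is that the drift has the right \emph{sign}, not that it is small. For $\im\ell\in(\pi,\pi+\pi/4)$ the increment $\approx q_n e^{\ell}$ has negative imaginary part, pushing $\im\ell$ back toward $\pi$; conversely on $(\pi-\pi/4,\pi)$: the band is a funnel with a restoring force, and what must be ruled out is merely an overshoot in a single step. The paper makes this precise by cutting $C^k_{n,a}$ into top, centre and bottom strips bounded by level curves $\rho^{k,i}_{n,a}$ of $\im\ell_{n,a}$, then proving: in the outer strips $\arg(H_{n,a}(w)-w)$ lies in $(3\pi/4,5\pi/6)$ resp.\ $(7\pi/6,5\pi/4)$ (P3), i.e.\ always points inward; the level curves are nearly horizontal (P2); and a single step has length $\leq 1/(6q_n)$ (P5) while each outer strip has width $\geq 1/(6q_n)$ (P1), so a point in an outer strip cannot overshoot the far boundary and a point in the centre strip cannot leave at all. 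That combination of direction, near-horizontality, step-length and strip-width bounds is the missing idea. Your escape-to-$-\infty$ step (if $\re w_m$ were bounded below then $|e^{\ell(w_m)}|$ would be bounded below, forcing a definite drop in $\re w$ per step) is fine once invariance is in hand, and is essentially the paper's closing argument.
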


\begin{proof}
{  
Fix an arbitrary $r_0 \in (0,1)$.
By Lemmas~\ref{lem:nofp} and \ref{L:k_n-behaviour}, there is $N \in \mathbb{N}$ such that for all $n\geq N$, all $a \in U$, and all 
$w$ in the left half-plane $\re w < \log r_0$, $\ell_{n,a}(w)$ is defined, and 
\begin{equation}\label{E:P:bands-in-petals-1}
\re k_{n,a}(e^w) < q_n \min \left \{ \frac{1}{4}, \frac{-\log r_0}{2} \right \} 
\qquad \text{and} \qquad  |k'_{n,a}(e^w)| < q_n \min \left \{ \frac{1}{4}, \frac{-\log r_0}{2} \right \}.
\end{equation}
Below we assume that $n\geq N$. 
Let us also fix $k \in \Z$ and $a \in U$. 

We divide the set $C_{n,a}^k$ into the sets 
\begin{align*} 
C_{n,a}^{k, t} &=\{ w \in \C : \re w<\log r_0 \text{ and }  2k\pi+ 7\pi/6 < \im \ell_{n,a}(w) < 2k\pi + 5\pi/4\}, \\
C_{n,a}^{k,c} & =\{ w \in \C : \re w < \log r_0 \text{ and }  2k\pi+5\pi/6 \leq \im \ell_{n,a}(w) \leq 2k\pi + 7\pi/6\}, \\
C_{n,a}^{k,b} & =\{ w \in \C : \re w < \log r_0 \text{ and }  2k\pi+3\pi/4 < \im \ell_{n,a}(w) < 2k\pi + 5\pi/6\}. 
\end{align*}
We also consider the curves  
\begin{align*}
\rho_{n,a}^{k,1} &= \{ w \in \C : \re w < \log r_0  \text{ and }  \im \ell_{n,a}(w)= 2k\pi+3\pi/4\}, \\
\rho_{n,a}^{k,2} &= \{ w \in \C : \re w < \log r_0  \text{ and }  \im \ell_{n,a}(w)= 2k\pi+5\pi/6\}, \\
\rho_{n,a}^{k,3} &= \{ w \in \C : \re w < \log r_0  \text{ and }  \im \ell_{n,a}(w)= 2k\pi+7\pi/6\}, \\
\rho_{n,a}^{k,4} &= \{ w \in \C : \re w < \log r_0  \text{ and }  \im \ell_{n,a}(w)= 2k\pi+5\pi/4\}.
\end{align*}
First we prove some properties of these sets, and the map $H_{n,a}$ on them. 

\medskip

\textbf{P1.} For $i= 1,3$, we have 
\[\inf  \big \{|w_1 -w_2| : w_1 \in \rho_{n,a}^{k,i}, w_2 \in \rho_{n,a}^{k,i+1} \big\} \geq \frac{1}{6q_n}.\]

\smallskip

Assume in the contrary that there are $i\in \{1,3\}$, $w_1 \in  \rho_{n,a}^{k,i}$ and $w_2 \in \rho_{n,a}^{k,i+1}$ 
such that $|w_1 -w_2| < 1/(6q_n)$. 
Then, by \eqref{E:P:bands-in-petals-1}, we must have 
\begin{align*}
\pi/12 & =\ell_{n,a}(w_2)- \ell_{n,a}(w_1) \\
& \leq |w_2 - w_1| \cdot  \sup \{|\ell_{n,a}'(w)|  : w\in \C, \re w < \log r_0\} \\
& < \frac{1}{6q_n} \cdot \left (q_n+ \sup\{ |k_{n,a}'(e^w) e^w| :  w\in \C, \re w < \log r_0\}\right) \\
& \leq \frac{1}{6q_n} \cdot \left (q_n+ \frac{q_n}{4} \right ), 
\end{align*}
which is a contradiction.  

\medskip

\textbf{P2.} For $i=1,2,3,4$, and all distinct $w_1, w_2 \in \rho_{n,a}^{k,i}$, 
\[\arg (w_2 - w_1) \in (-\pi/6, +\pi/6)+ \pi \mathbb{Z}.\]  

\smallskip 

For this, it is enough to note that for all $w \in \rho_{n,a}^{k,i}$ we have 
\begin{align*}
|\arg \ell'_{n,a}(w)| = | \arg (q_n+ k'_{n,a}(w) e^w )|  \leq \arcsin (1/4)< \pi/6. 
\end{align*}
Above, we have used $|e^w| <1$ and \eqref{E:P:bands-in-petals-1}. 
In particular, $\ell_{n,k}'(w) \neq 0$. 

\medskip

\textbf{P3.} We have 
\[\arg (H_{n,a}(w)-w) \in (3\pi/4, 5\pi/6)+ 2\pi \mathbb{Z}, \quad \text{for all } w \in C_{n,a}^{k,b}\] 
and 
\[\arg (H_{n,a}(w)-w) \in (7\pi/6, 5\pi/4) + 2 \pi \mathbb{Z} \quad \text{ for all } w \in C_{n,a}^{k,t}.\]

\smallskip 

By Lemma~\ref{lem:nofp}, for all $w$ in the left half-plane $\re w< \log r_0$, 
\begin{align*}
H_{n,a}(w) = w + g_{n,a}(e^{w}) = w+ e^{\ell_{n,a}(w)}
\end{align*}
Then,
\begin{align*}
\arg (H_{n,a}(w)- w)= \arg e^{\ell_{n,a}(w)}= \im \ell_{n,a}(w).
\end{align*}
Combining with the definitions of $C_{n,a}^{k,b}$ and $C_{n,a}^{k,t}$ we obtain P3. 

\medskip

\textbf{P4.} For all $w \in C_{n,a}^{k,c}$, 
\[\arg (H_{n,a}(w)-w) \in (5\pi/6, 7\pi/6)+ 2\pi \mathbb{Z}.\] 

\smallskip

We presented P4 separately, for the sake of the clarity of the later arguments. Otherwise, the proof is already given in P3.

\medskip

\textbf{P5.} There is $N_2 \in \N$, independent of $a \in \Lambda$ and $k\in \Z$, such that for all $n \geq N_2$ and all $w \in C_{n,a}^k$, 
we have 
\[|H_{n,a}(w)-w| \leq 1/(6q_n).\] 

\smallskip 

To see this, note that by our choice of $N$ for \eqref{E:P:bands-in-petals-1}, 
\begin{align*}
\left |H_{n,a}(w) - w \right| & = |e^{\ell_{n,a}(w)}| \\
&  \leq |b_n(a)| |e^{q_n u(a)}| e^{q_n \log r_0 -q_n (\log r_0)/2} \\
& = |b_n(a)| r_\theta(a)^{q_n}  e^{q_n(\log r_0)/2} 
\end{align*}
By Theorem \ref{th:bn}, $\lim_{n \to \infty} |b_n(a)|^{1/q_n} r_\theta(a)=1$. 
Thus, for large enough $n$, 
\[|b_n(a)| r_\theta(a)^{q_n} \leq e^{q_n(-\log r_0)/4}.\]  
Combining with the previous equation, we conclude that for large enough $n$, 
\begin{align*}
\left |H_{n,a}(w) - w \right| & \leq e^{q_n(\log r_0)/4} < 1/(6q_n).
\end{align*}

Now we are ready to complete the proof. By P2, every $\rho_{n,a}^{k,i}$, for $i=1,2,3,4$, meets the 
vertical line $\re w=\log r_0$ at a single point, and divides the left half-plane $\re w < \log r_0$ into two connected components. 
Thus we may talk about the component below or above $\rho_{n,a}^{k,i}$ in that left half-plane. 
Let us first show that $H_{n,a}(C_{n,a}^{k,b}) \subset C_{n,a}^k$. 
Fix an arbitrary $w \in C_{n,a}^{k,b}$. 
Because $\re w < \log r_0$, by P3, $\re H_{n,a}(w) < \log r_0$. 
By P3 and P2, $H_{n,a}(w)$ lies above the curve $\rho_{n,a}^{k,1}$. 
By P5 and P1, $H_{n,a}(w)$ lies below the curve $\rho_{n,a}^{k,4}$.
Combining these, we conclude that $H_{n,a}(w) \in C_{n,a}^k$. 
By a symmetric argument, $H_{n,a}(C_{n,a}^{k,t}) \subset C_{n,a}^k$. 
On the other hand, by P5 and P1, $H_{n,a}(C_{n,a}^{k,c})$ is contained in $C_{n,a}^k$. 
Therefore, $C_{n,a}^k$ is invariant under $H_{n,a}$. 

By P3 and P4, for every $w$ in the closure of $C_{n,a}^k$, $\re H_{n,a}(w) < \re w$. 
This implies that for every $w\in C_{n,a}^k$, $\re H_{n,a}^{\circ m}(w) \to -\infty$, as $m \to +\infty$.
}
\end{proof}

\begin{cor}\label{C:petal-in-basin}
For every $k_1, k_2 \in \Z$, $\exp(C_{n, a}^{k_i})$ is contained in a connected component { $\mathcal{P}_{n,a}^{k_i}$} 
of the immediate parabolic basin of $f_{\lam_n,a}$. Moreover, if $k_1 - k_2  \notin  q_n \Z$, 
then { $\mathcal{P}_{n,a}^{k_1} \neq \mathcal{P}_{n,a}^{k_2}$}.
\end{cor}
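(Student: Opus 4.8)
The plan is to translate the dynamical statement about $H_{n,a}$ from Proposition \ref{P:bands-in-petals} back through the exponential and linearising coordinate changes to the original map $f_{\lam_n,a}$. First I would recall that $\exp \circ H_{n,a} = G_{n,a} \circ \exp$, so the forward invariance of $C_{n,a}^k$ under $H_{n,a}$ (established in Proposition \ref{P:bands-in-petals}) implies that $\exp(C_{n,a}^k)$ is forward invariant under $G_{n,a}$, and that every point of $\exp(C_{n,a}^k)$ converges to $0$ under iteration of $G_{n,a}$, since $\re H_{n,a}^{\circ m}(w) \to -\infty$ exactly says $|\exp(H_{n,a}^{\circ m}(w))| \to 0$. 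Unwinding the normalisations $G_{n,a}(z) = v(a)^{-1} \tilde G_{n,a}(v(a)z)$ and $\tilde G_{n,a} = \phi_a \circ f_{\lam_n,a}^{\circ q_n} \circ \phi_a^{-1}$, this means $v(a) \exp(C_{n,a}^k)$ is a subset of the linearising disk on which $f_{\lam_n,a}^{\circ q_n}$ is defined, forward invariant, and converges to $0$; pulling back by $\phi_a^{-1}$ gives a forward-invariant-under-$f_{\lam_n,a}^{\circ q_n}$ set $W_{n,a}^k := \phi_a^{-1}(v(a)\exp(C_{n,a}^k))$ in the dynamical plane, every point of which tends to $0$ under $f_{\lam_n,a}^{\circ q_n}$.

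Next I would use that $0$ is a parabolic fixed point of $f_{\lam_n,a}$ of multiplier $\lam_n = e^{2i\pi p_n/q_n}$ with, by \eqref{E:b_n-introduced}, exactly $q_n$ attracting petals (the leading term after $z \mapsto z + b_n(a)z^{q_n+1}+\dots$). Since $W_{n,a}^k$ is connected (it is the image of the connected set $C_{n,a}^k$ under the holomorphic maps $\exp$, multiplication by $v(a)$, and $\phi_a^{-1}$) and every point of it is attracted to $0$ under $f_{\lam_n,a}^{\circ q_n}$, it must be contained in a single attracting petal for $f_{\lam_n,a}^{\circ q_n}$, hence in a single connected component $\mathcal{P}_{n,a}^k$ of the immediate parabolic basin of $f_{\lam_n,a}$. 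This handles the first assertion; note $\exp(C_{n,a}^{k_i})$ itself sits in the $G_{n,a}$-coordinate, but the statement as phrased refers to the parabolic basin of $f_{\lam_n,a}$, so strictly one should say $\exp(C_{n,a}^{k_i})$ determines, via $\phi_a^{-1}(v(a)\cdot)$, the component $\mathcal{P}_{n,a}^{k_i}$; alternatively one identifies $\exp(C_{n,a}^{k_i})$ with its image under this conjugacy.

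For the second assertion, that $\mathcal{P}_{n,a}^{k_1} \neq \mathcal{P}_{n,a}^{k_2}$ when $k_1 - k_2 \notin q_n\Z$, the key point is to track the cyclic action. The $q_n$ immediate basin components of $f_{\lam_n,a}$ are cyclically permuted by $f_{\lam_n,a}$, which corresponds, in the $w$-coordinate after conjugation, to the translation $w \mapsto w + 2\pi i/q_n$ composed with small error (from $L_{n,a}$ in the notation of Lemma \ref{L:no-fixed-points}): rotating the linearising variable $z$ by $\lam_n$ shifts $\log z$ by $2\pi i p_n/q_n$. So I would identify which component $\mathcal{P}_{n,a}^k$ is reached from $C_{n,a}^k$ by tracking how the bands $C_{n,a}^k$, indexed by the residue $\im \ell_{n,a}(w) \approx 2k\pi$, transform under one application of $f_{\lam_n,a}$ (equivalently under $L_{n,a}$-conjugated-to-$w$-plane, i.e. a shift by $2\pi p_n/q_n$ in the relevant real part) — and conclude that two bands land in the same component iff their indices differ by a multiple of $q_n$, since $\gcd(p_n, q_n) = 1$. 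The main obstacle I anticipate is being careful about this last bookkeeping: precisely relating the integer label $k$ of the band $C_{n,a}^k$ to the label of the petal of $f_{\lam_n,a}^{\circ q_n}$ and then to the immediate-basin component of $f_{\lam_n,a}$, while controlling that the error terms in Lemma \ref{lem:jell}/Lemma \ref{L:no-fixed-points} do not cause a band to spill into a neighbouring component. One clean way to sidestep most of this is to observe directly that $H_{n,a}$ essentially commutes (up to a translation by $2\pi i$ and negligible error) with $w \mapsto w + 2\pi i$, which corresponds to the identity on $z = e^w$, whereas the deck transformation $z \mapsto \lam_n z$ corresponds to $w \mapsto w + 2\pi i p_n/q_n$; since $f_{\lam_n,a}$ acts on the linearising coordinate as multiplication by $\lam_n$, the orbit of the band $C_{n,a}^{k}$ under $f_{\lam_n,a}$ cycles through bands whose indices run over a coset of $q_n\Z$, giving the claim.
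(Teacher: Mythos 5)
Your first assertion is handled the same way as in the paper: Proposition~\ref{P:bands-in-petals} gives forward invariance under $H_{n,a}$ and escape to $\re w\to-\infty$, and unwinding through $\exp$, $v(a)$ and $\phi_a$ puts a connected forward-invariant set attracted to $0$ under $f_{\lam_n,a}^{\circ q_n}$ in the dynamical plane. That part is fine.

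For the second assertion you take a genuinely different route from the paper, and as written it has a gap. The paper argues geometrically: from $\ell_{n,a}(w)=\log b_n(a)+q_n u(a)+q_n w+k_{n,a}(e^w)$ with $k_{n,a}(e^w)\to 0$ as $\re w\to-\infty$, the band $C_{n,a}^k$ becomes asymptotically horizontal, so $\exp(C_{n,a}^k)$ lands at $0$ at a well-defined angle, and these angles for $k\not\equiv k'\ (\mathrm{mod}\ q_n)$ differ by a nonzero multiple of $2\pi/q_n$; since $b_n(a)\neq0$ there are exactly $q_n$ repelling directions, spaced $2\pi/q_n$ apart, and a repelling direction separates the two bands into distinct components. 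You instead try to track the cyclic action of $f_{\lam_n,a}$ on the $q_n$ basin components. Two problems. First, the statement ``the orbit of the band $C_{n,a}^k$ under $f_{\lam_n,a}$ cycles through bands whose indices run over a coset of $q_n\Z$'' is wrong: a single application of $f_{\lam_n,a}$ shifts $w$ by roughly $2\pi i p_n/q_n$, hence shifts $\im\ell_{n,a}$ by roughly $2\pi p_n$, so the band index $k$ goes to $k+p_n$, and the orbit indices form a coset of $p_n\Z$ (equivalently, they exhaust all residues mod $q_n$ because $\gcd(p_n,q_n)=1$); the indices lying in a fixed coset of $q_n\Z$ is instead what characterises two bands having the \emph{same} $\exp$-image, since $C_{n,a}^{k+q_n}=C_{n,a}^k+2\pi i$. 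Second, Proposition~\ref{P:bands-in-petals} controls only $H_{n,a}$, i.e.\ $f_{\lam_n,a}^{\circ q_n}$; to run your argument you would have to prove the single-iterate relation $f_{\lam_n,a}(\exp(C_{n,a}^k))\subset\exp(C_{n,a}^{k+p_n})$ (up to the usual conjugacies), uniformly enough that the error in Lemma~\ref{lem:jell} cannot push the image into the wrong band. You flag this bookkeeping as the main obstacle and then propose a shortcut, but the shortcut is exactly where the misstatement sits. The paper's angle-and-repelling-petal argument avoids this entirely by never needing to relate $C_{n,a}^k$ to $C_{n,a}^{k+p_n}$ dynamically.
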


\begin{proof}
{ By Proposition~\ref{P:bands-in-petals}, and the definitions of $H_{n,a}$, $G_{n,a}$ and $f_{\lambda_n,a}$, 
the sets $\exp(C_{n,a}^{k})$ are contained in the immediate parabolic basins of $0$ for $f_{\la_n,a}$. 
When $k_1 - k_2  \notin  q_n 2\pi\Z$, $\exp(C_{n,a}^{k_1})$ and $\exp(C_{n,a}^{k_2})$ land at $0$ at well-defined angles, 
where the angel between them is a non-zero integer multiple of $2\pi/q_n$. 
On the other hand, since $b_n(a) \neq 0$, there are exactly $q_n$ repelling petals, 
landing at 0 at well-defined angles, with equally spaced at angle $2\pi/q_n$ between consecutive ones. 
This implies that for $k_1 - k_2  \notin q_n 2\pi \Z$, $\exp(C_{n,a}^{k_1})$ and $\exp(C_{n,a}^{k_2})$ are disjoint. 
}
\end{proof}

\section{Proof of Theorem \ref{th:supp}}
{ There are two cases to consider, depending on whether a parameter $a$ in a capture component $U$ lies in a non-degenerate parabolic locus, or not. 
The former case is the most difficult to deal with, and we only look at the later case near the end of this section. 
Let us continue to use the notations introduced in the previous sections. 
In particular, $\Omega$ is a non-degenerate parabolic locus in $U^*$ and $\Lambda \subset \Omega$ is an arbitrary compact set.
From now on we also further assume that $\Omega$ is compactly contained in $U^*$. 
 
Recall that $c_{\la,a}$ is a critical point of $f_{\la,a}$ which is mapped into the Siegel disk $\Delta_{\theta}(a)$ in $k$ iterates. 
Consider the functions 
\[K(a):=\log \phi_a \circ f_{\la,a}^{\circ k}(c_{\la,a}),\] 
and $K_n: \Omega \to \C$,  where 
\[K_n(a):=\log \phi_a \circ f_{\la_n,a}^{\circ k}(c_{\la_n,a}).\]
Because $\Omega$ is compactly contained in $U^*$, for large enough $n$, the corresponding critical point $c_{\la_n,a}$ of $f_{\la_n,a}$ 
will be mapped into $\Delta_{\theta}(a) \setminus \{0\}$ in $k$ iterates as well.
Also, because $\Omega$ is chosen simply connected, the $\log$ function is well-defined.
For convenience, let us assume that $K_n$ is defined for all $n$ (otherwise one considers all sufficiently large $n$).}
Clearly, $K_n$ uniformly converges to $K$ on $\Omega$. 

\begin{lem}\label{L:chi_n-convergence}
{ For every $a_0 \in \Omega$,} the sequence of functions 
\[\chi_n(a)=\frac{1}{q_n} \ell_{n,a} \circ K_n(a) -\frac{1}{q_n} \ell_{n,a_0} \circ K_{ n}(a_0)\] 
converges uniformly { on compact subsets of $\Omega$} to a non-constant holomorphic function { $\chi: \Omega \to \C$}. 
\end{lem}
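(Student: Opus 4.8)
The plan is to write $\ell_{n,a}\circ K_n(a)$ explicitly using Lemma~\ref{lem:nofp}. By that lemma, on the left half-plane we have
\[\ell_{n,a}(w)=\log b_n(a)+q_n u(a)+q_n w+k_{n,a}(e^w),\]
so substituting $w=K_n(a)$ and dividing by $q_n$ gives
\[\frac{1}{q_n}\ell_{n,a}\circ K_n(a)=\frac{1}{q_n}\log b_n(a)+u(a)+K_n(a)+\frac{1}{q_n}k_{n,a}\big(e^{K_n(a)}\big).\]
First I would argue that $e^{K_n(a)}=\phi_a\circ f_{\la_n,a}^{\circ k}(c_{\la_n,a})$ stays in a fixed disk $\D(0,r_0r_\theta(a))$ with $r_0<1$ for $a$ in a compact subset of $\Omega$ and $n$ large: this follows because $\Omega$ is compactly contained in $U^*$, the critical values move continuously, and the linearizing coordinates converge (one can invoke Lemma~\ref{lem:jell} together with the convergence $f_{\la_n,a}\to f_{\la,a}$ and $\phi_a$-continuity). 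Once this is established, Lemma~\ref{L:k_n-behaviour} (applied on a slightly larger compact set and a slightly larger radius $r_0'<1$ so that the values $e^{K_n(a)}$ lie in $\overline{\D}(0,r_0')$) shows that the term $\tfrac1{q_n}k_{n,a}(e^{K_n(a)})$ tends to $0$ uniformly on compact subsets of $\Omega$ — indeed Lemma~\ref{L:k_n-behaviour} controls $\re k_{n,a}/q_n$ and $k_{n,a}'/q_n$ uniformly, and since $k_{n,a}(0)=0$ one gets a uniform bound on $|k_{n,a}|/q_n$ over $\overline{\D}(0,r_0')$ by integrating the derivative bound, hence also that $k_{n,a}(e^{K_n(a)})/q_n\to 0$.

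Next I would handle the remaining two $n$-dependent terms. The term $K_n(a)$ converges uniformly on $\Omega$ to $K(a)$, as already noted in the excerpt. For $\tfrac1{q_n}\log b_n(a)$: this is the crucial input. On the simply connected non-degenerate parabolic locus $\Omega$, the functions $\tfrac1{q_n}\log b_n(a)$ are holomorphic for large $n$, and by Theorem~\ref{th:bn} their real parts $\tfrac1{q_n}\log|b_n(a)|$ converge uniformly on compact subsets of $\Omega$ to $-\log r_\theta(a)=-\re u(a)$. The harmonic-conjugate argument (as in the discussion following Theorem~\ref{th:bn}) then upgrades this to uniform convergence of the holomorphic functions themselves, once the imaginary parts are pinned down at a single base point $a_0$: subtracting the value at $a_0$ kills the ambiguity. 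Therefore the function
\[\chi_n(a)=\frac{1}{q_n}\ell_{n,a}\circ K_n(a)-\frac{1}{q_n}\ell_{n,a_0}\circ K_n(a_0)\]
converges uniformly on compact subsets of $\Omega$; writing out the difference, all the $\tfrac1{q_n}k_{n,a}$-terms go to $0$, the $K_n$-terms give $K(a)-K(a_0)$, the $u$-terms give $u(a)-u(a_0)$, and the $\tfrac1{q_n}\log b_n$-terms give a holomorphic limit $\psi(a)-\psi(a_0)$ where $\re(\psi(a)-\psi(a_0))=-\log r_\theta(a)+\log r_\theta(a_0)=-\re u(a)+\re u(a_0)$. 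So the limit is
\[\chi(a)=\big(K(a)-K(a_0)\big)+\big(u(a)-u(a_0)\big)+\big(\psi(a)-\psi(a_0)\big),\]
a holomorphic function on $\Omega$ with $\chi(a_0)=0$.

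Finally I would prove $\chi$ is non-constant. Note $\re\big(u(a)+\psi(a)\big)$ is constant (equal to $0$ up to the base-point normalization), so $u+\psi$ is a constant, and hence $\chi(a)=K(a)-K(a_0)+\text{const}$, i.e. $\chi$ differs from $K$ by an additive constant. Thus it suffices to show $K(a)=\log\phi_a\circ f_{\la,a}^{\circ k}(c_{\la,a})$ is non-constant on $\Omega$. This is where the capture structure is used: the map $a\mapsto \phi_a\circ f_{\la,a}^{\circ k}(c_{\la,a})$ is, up to the holomorphic rescaling $\phi_a$, essentially the coordinate that parametrizes the capture component $U$ (by \cite{Za99} the center of $U$ is the unique parameter where this value equals $0$, and the holomorphic motion makes it a proper, non-constant holomorphic function of $a$ on $U$, hence on $\Omega$); if it were constant, $\Omega$ would contain no center and the value would be a fixed nonzero point, contradicting either the properness onto a neighborhood of $0$ in the linearized picture or, more elementarily, the fact that perturbing $a$ genuinely moves the critical value while the Siegel disk moves only by a holomorphic motion. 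I expect this non-constancy step to be the main obstacle: one must be careful that the linearizing coordinates $\phi_a$, being normalized by $\phi_a'(0)=1$ rather than mapping onto a fixed disk, do not accidentally conspire to make the composition constant — this requires genuinely invoking that $a\mapsto f_{\la,a}^{\circ k}(c_{\la,a})$ is non-constant and that $\phi_a$ depends holomorphically and injectively enough on $a$, which follows from the holomorphic motion of $\Delta_\theta(a)$ over $U$.
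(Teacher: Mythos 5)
Your proposal is correct and takes essentially the same approach as the paper: decompose $\ell_{n,a}\circ K_n(a)$ via Lemma~\ref{lem:nofp}, use Lemma~\ref{L:k_n-behaviour} (together with $k_{n,a}(0)=0$) to kill the $k_{n,a}$ term, invoke Theorem~\ref{th:bn} and the base-point normalization $\chi_n(a_0)=0$ to pass from convergence of real parts to convergence of the holomorphic functions, and cite Zakeri for non-constancy of $K-K(a_0)$. The paper is a bit more economical — it takes real parts of the whole expression so that $\tfrac1{q_n}\log|b_n(a)|$ and $\log r_\theta(a)$ cancel at once, rather than introducing an intermediate limit $\psi$ for the $\log b_n$ term and then arguing separately that $u+\psi$ is constant — but the decomposition, the inputs, and the conclusion are identical.
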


\begin{proof}
{ As $a$ varies in a compact subset of $\Omega$, $\phi_a \circ f_{\la,a}^{\circ k}(c_{\la,a})$ forms a compact subset of $\D(0,1)$. Employing 
Lemma~\ref{L:k_n-behaviour}, we conclude that 
\[\frac{1}{q_n} \ell_{n,a} \circ K_n(a) = \frac{1}{q_n} \log b_n(a)+u(a) + K(a) + o(1),\]
on any compact subset of $\Omega$, with the constant in $o$ depending only on that compact set.} 
Thus, 
\[\re \frac{1}{q_n} \ell_{n,a} \circ K_n(a) = \frac{1}{q_n} \log |b_n(a)| +  \log r_\theta(a)+ \log |\phi_a \circ f_{\la,a}^{\circ k}(c_a)| + o(1).\]
By Theorem \ref{th:bn}, { on any compact subset of $\Omega$,} $\frac{1}{q_n} \log |b_n(a) | \to - \log r_\theta(a)$.
{ In particular, on any compact subset of $\Omega$, as $n \to \infty$,}  
\[\re \chi_n \to \re \left(K(a) - K(a_0) \right)= \log |\phi_a \circ f_{\la,a}^{\circ k}(c_a)| -  	\log |\phi_{a_0} \circ f_{\la,a_0}^{\circ k}(c_{a_0})|.\] 
By definition, $\chi_n(a_0)=0$. 
It follows that $\chi_n$ converges to $K(a) - K(a_0)$, uniformly on compact subsets of $\Omega$.
	
{ By a classical argument of quasi-conformal deformation, $K(a) - K(a_0)$ is not constant (see for instance \cite[Theorem 7.3]{Za99}).}
\end{proof}

\begin{prop}\label{P:perturb}
{ For every $a_0 \in \Omega$ and every $\eps>0$ there is $N \in \N$ satisfying the following property. 
For all $n\geq N$, there are $a, b \in B(a_0, \eps)$ and integers $k_a$ and $k_b$ with $k_a - k_b \notin q_n \Z$ such that 
\[K_n(a) \in C_{n, a}^{k_a} \qquad \text{ and } \qquad  K_n(b) \in C_{n,b}^{k_b}.\]}
\end{prop}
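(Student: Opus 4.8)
The plan is to translate both membership requirements into a single requirement on $\im\chi_n$, and then to exploit that, since $q_n\to\infty$, this requirement is extremely sensitive to the parameter while $\chi_n$ itself varies over a fixed open set near $a_0$; the non-constancy of the limit $\chi$ from Lemma~\ref{L:chi_n-convergence} is what creates the needed room.

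Since the conclusion for a given $\eps$ follows from the conclusion for any smaller one, I would first assume $\overline{B(a_0,\eps)}\subseteq\Omega$ and put $\Lambda:=\overline{B(a_0,\eps)}$. By the discussion in the proof of Lemma~\ref{L:chi_n-convergence}, $\sup_{a\in\Lambda}|\phi_a\circ f_{\la,a}^{\circ k}(c_{\la,a})|<1$, so I may fix $r_0\in(0,1)$ above this supremum and apply the constructions of Section~\ref{S:parabolic-elliptic} with this $r_0$ on $\Lambda$. Since $K_n\to K$ uniformly on $\Lambda$ and $\re K(a)=\log|\phi_a\circ f_{\la,a}^{\circ k}(c_{\la,a})|$, we get $\re K_n(a)<\log r_0$ for all $a\in\Lambda$ and all large $n$, so the real-part clause in the definition of $C_{n,a}^{k}$ is automatic at $w=K_n(a)$. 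Rearranging the definition of $\chi_n$ gives $\ell_{n,a}(K_n(a))=q_n\chi_n(a)+\ell_{n,a_0}(K_n(a_0))$; writing $\beta_n:=\im\ell_{n,a_0}(K_n(a_0))\in\R$, the relation $K_n(a)\in C_{n,a}^{k}$ reduces, for large $n$ and $a\in\Lambda$, to
\[
2k\pi+\tfrac{3\pi}{4}\;<\;q_n\,\im\chi_n(a)+\beta_n\;<\;2k\pi+\tfrac{5\pi}{4}.
\]

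Next I would produce the two parameters. For $k\in\Z$ consider the pairwise disjoint open intervals $J_k^{(n)}:=\{\,y\in\R:2k\pi+3\pi/4<q_ny+\beta_n<2k\pi+5\pi/4\,\}$, each of length $\pi/(2q_n)$, with centres forming an arithmetic progression of step $2\pi/q_n$. By Lemma~\ref{L:chi_n-convergence}, $\chi$ is holomorphic and non-constant on $\Omega$ with $\chi(a_0)=0$, so by the identity theorem $\im\chi$ is a non-constant harmonic function on $B(a_0,\eps)$ vanishing at $a_0$; the maximum principle then forces $\im\chi$ to take a strictly positive and a strictly negative value on $B(a_0,\eps)$, and I fix $\delta_0>0$ and $a_\pm\in B(a_0,\eps)$ with $\im\chi(a_+)\ge 2\delta_0$ and $\im\chi(a_-)\le-2\delta_0$. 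Because $\chi_n\to\chi$ uniformly on $\overline{B(a_0,\eps)}$, for all large $n$ one has $\im\chi_n(a_+)>\delta_0$ and $\im\chi_n(a_-)<-\delta_0$, and the intermediate value theorem applied to $\im\chi_n$ along the segment $[a_-,a_+]\subset B(a_0,\eps)$ yields $[-\delta_0,\delta_0]\subseteq\im\chi_n\big(B(a_0,\eps)\big)$. For all sufficiently large $n$ (so that $q_n\ge 2$ and the interval $(-\delta_0,\delta_0)$, of fixed length, contains the closures of two consecutive intervals $J_k^{(n)},J_{k+1}^{(n)}$ of length $\pi/(2q_n)$), I then pick $y_a\in J_k^{(n)}$, $y_b\in J_{k+1}^{(n)}$, and $a,b\in B(a_0,\eps)$ with $\im\chi_n(a)=y_a$, $\im\chi_n(b)=y_b$. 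By the reduction of the previous paragraph this gives $K_n(a)\in C_{n,a}^{k}$ and $K_n(b)\in C_{n,b}^{k+1}$; setting $k_a:=k$, $k_b:=k+1$ we have $k_a-k_b=-1\notin q_n\Z$, and letting $N$ be the largest of the finitely many lower thresholds on $n$ used above completes the argument.

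I do not expect a genuine obstacle here: the hard content is upstream — the non-constancy of $\chi$ (Lemma~\ref{L:chi_n-convergence}, itself a quasi-conformal deformation argument) and the invariant petal-band structure with its uniform estimates (Section~\ref{S:parabolic-elliptic}) — and once these are granted the present statement is a soft covering argument. The two points that call for a little care are transferring the open-image property of $\chi$ to the approximants $\chi_n$ \emph{uniformly in $n$}, which is why I use the intermediate value theorem along a segment rather than a Hurwitz- or Rouch\'e-type argument, and keeping the correspondence between petal indices and parameters transparent, which is why I aim directly at the consecutive indices $k$ and $k+1$ so that $k_a-k_b=-1$ is visibly not a multiple of $q_n$.
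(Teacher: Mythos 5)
Your argument is correct and follows essentially the same route as the paper: reduce both petal-membership conditions to the position of $\im\ell_{n,a}(K_n(a))=q_n\im\chi_n(a)+\beta_n$ modulo $2\pi$, then exploit the non-constancy of the limit $\chi$ together with the factor $q_n$ to hit two consecutive windows. The only technical variation is that the paper extracts a fixed disk $B(0,\delta)\subset\chi_n(B(a_0,\eps))$ for large $n$ (an open-mapping/Rouch\'e-type step), while you restrict to $\im\chi_n$ and use the intermediate value theorem along a segment; both feed into the same covering argument.
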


\begin{proof}
{ Fix arbitrary $a_0 \in \Omega$ and $\eps>0$. 
By making $\eps$ smaller, we may assume that the closure of $B(a_0, \eps)$ is contained in $\Omega$. 
Since $\Omega$ is compactly contained in $U^*$, $K(\Omega)$ is compactly contain the left half-plane 
$\re w <0$. As $K_n$ converges to $K$ uniformly on $\Omega$, there is $r_0 \in (0,1)$ such that for sufficiently large $n$, 
$K_n(\Omega)$ is contain in the left half-plane $\re w < \log r_0$.

It follows from Lemma~\ref{L:chi_n-convergence} that there is $\delta>0$ such that for large enough $n$, 
$\chi_n(B(a_0, \eps))$ contains $B(0, \delta)$. 
Then, $\{ \ell_{n,a}(K_n(a)) : a \in B(a_0, \eps)\}$ must contain $B(K_n(a_0), q_n \delta)$. 
In particular, if $n$ is large enough (to also make $q_n \delta \geq 4$), 
$\{ \ell_{n,a}(K_n(a)) : a \in B(a_0, \eps)\}$ contains a ball of radius $4$ around a point in the left half-plane $\re w < \log r_0$. 
	
Now we may employ Proposition~\ref{P:bands-in-petals}, with $r_0$ and $\Lambda = \overline{B(a_0, \eps)}$, 
to conclude that  for sufficiently large $n$, there are $a,b$ in $B(a_0, \eps)$ and $k_a$ and $k_b= k_a+1$ 
satisfying the desired properties in the proposition.}
\end{proof}

\begin{prop}\label{prop:unstable}
For every $a_0 \in \Omega$ and every neighbourhood $V \subset \C$ of $a_0$, there exists $N \in \N$ such that for all $n \geq N$, 
the family $\{ f_{\la_n,a} \}_{a \in V}$ is not $J$-stable. 
\end{prop}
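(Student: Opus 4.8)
The plan is to deduce instability of the family $\{f_{\la_n, a}\}_{a \in V}$ from Proposition~\ref{P:perturb} by exhibiting, for large $n$, a parameter at which the critical point $c_{\la_n, a}$ has a non-persistent behaviour. First I would shrink $V$ if necessary so that $V \subset \Omega$ and $\overline V$ is compact in $\Omega$; since instability on a smaller set implies instability on $V$, this is harmless. Then I would apply Proposition~\ref{P:perturb} with $\eps$ small enough that $B(a_0, \eps) \subset V$: for every sufficiently large $n$ we obtain parameters $a, b \in B(a_0,\eps)$ and integers $k_a, k_b$ with $k_a - k_b \notin q_n \Z$ such that $K_n(a) \in C_{n,a}^{k_a}$ and $K_n(b) \in C_{n,b}^{k_b}$.

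The key point is to translate membership of $K_n(a)$ in the band $C_{n,a}^{k_a}$ into a statement about the critical orbit of $f_{\la_n, a}$. By construction $K_n(a) = \log \phi_a \circ f_{\la_n, a}^{\circ k}(c_{\la_n, a})$, so $e^{K_n(a)}$ is the image of the $k$-th iterate of the critical point under the linearising coordinate $\phi_a$, rescaled by $v(a)$; unwinding the definitions of $G_{n,a}$, $H_{n,a}$ and $\exp \circ H_{n,a} = G_{n,a} \circ \exp$, the point $K_n(a)$ lying in $C_{n,a}^{k_a}$ means exactly that the (rescaled, linearised, $q_n$-th iterated) critical orbit enters a forward-invariant band on which $\re H_{n,a}^{\circ m} \to -\infty$ by Proposition~\ref{P:bands-in-petals}. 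Pushing this forward through $\exp$ and Corollary~\ref{C:petal-in-basin}, this says the critical point $c_{\la_n, a}$ is attracted to the parabolic fixed point $0$ of $f_{\la_n, a}$, and moreover it lands in the specific immediate parabolic basin component $\mathcal{P}_{n,a}^{k_a}$. The same holds for $b$ with component $\mathcal{P}_{n,b}^{k_b}$, and since $k_a - k_b \notin q_n \Z$, Corollary~\ref{C:petal-in-basin} tells us $\mathcal{P}_{n,a}^{k_a}$ and $\mathcal{P}_{n,b}^{k_b}$ are \emph{different} components of the immediate parabolic basin.

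From here the conclusion is a standard $J$-stability argument. If the family $\{f_{\la_n, a}\}_{a \in V}$ were $J$-stable, then by Mañé–Sad–Sullivan there would be a holomorphic motion of the Julia sets over $V$ conjugating the dynamics, and the combinatorics of the parabolic basin — in particular, \emph{which} immediate basin component captures the marked critical point $c_{\la_n, \cdot}$ — would be locally constant in $a$. But $a$ and $b$ both lie in the connected set $B(a_0, \eps) \subset V$, and at $a$ the critical point falls into $\mathcal{P}_{n,a}^{k_a}$ while at $b$ it falls into the distinct component $\mathcal{P}_{n,b}^{k_b}$; this is precisely a change in the relative position of the critical orbit with respect to the Julia set, i.e.\ the critical point is active somewhere in $B(a_0,\eps)$. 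Hence $\{f_{\la_n, a}\}_{a \in V}$ is not $J$-stable, which is what we wanted; taking $N$ to be the maximum of the thresholds coming from Proposition~\ref{P:perturb} and from the earlier lemmas finishes the proof.

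The main obstacle I anticipate is the bookkeeping in the second paragraph: making rigorous the chain of identifications $K_n(a) \in C_{n,a}^{k_a} \implies c_{\la_n,a}$ lands in $\mathcal{P}_{n,a}^{k_a}$, keeping careful track of the rescalings by $v(a)$ and $r_n$, of the relation $\exp \circ H_{n,a} = G_{n,a}\circ \exp$, and of the fact that $f_{\la_n,a}^{\circ q_n}$ near $0$ is conjugate to $G_{n,a}$ via $\phi_a$ composed with the linear scaling. A second, more subtle point is ensuring that "the marked critical point lies in immediate basin component number $k$" is genuinely a $J$-stability invariant: one should phrase this in terms of the itinerary of the critical orbit relative to the (moving) repelling petals / the dynamical partition, and invoke that under a holomorphic motion respecting the dynamics this itinerary cannot jump. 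Granting the framework of Sections~\ref{S:parabolic-elliptic}, both points are routine but require care.
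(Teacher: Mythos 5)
Your proposal is correct and follows essentially the same route as the paper's proof: apply Proposition~\ref{P:perturb} to get $a, b$ near $a_0$ with $K_n(a)\in C_{n,a}^{k_a}$ and $K_n(b)\in C_{n,b}^{k_b}$ for $k_a-k_b\notin q_n\Z$, then push through Corollary~\ref{C:petal-in-basin} to place the marked critical orbit in distinct immediate-basin components, and conclude by the fact that under $J$-stability the component containing $f_{\la_n,s}^{\circ k}(c_{\la_n,s})$ would be locally constant in $s$. The paper states the last step just as tersely as you do, so the ``subtle point'' you flag about a holomorphic motion respecting the combinatorics is exactly the (standard) fact the authors also use without elaboration.
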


\begin{proof}
{ Let us choose $\eps>0$ so that $B(a_0, \eps) \subset V \cap \Omega$. 
Then, we apply Proposition~\ref{P:perturb} to obtain the $N \in \N$. 
For each $n\geq N$, we obtain $a, b \in V$, and $k_a, k_b \in \mathbb{Z}$ satisfying the properties in that proposition.	 

Assume for a contradiction that the family $\{f_{\la_n,a} \}_{a \in V}$ is $J$-stable, for some $n\geq N$. 
Combining the above paragraph with Corollary~\ref{C:petal-in-basin}, we obtain 
\[f_{\la_n,a}^{\circ k}(c_{\la_n,a}) \in \mathcal{P}_{n,a}^{k_a}.\]
Then by $J$-stability, we must have 
\[f_{\la_n,s}^{\circ k} (c_{\la_n,s}) \in \mathcal{P}_{n,s}^{k_a}\]
for all $s \in V$. But this is not the case since 
\[f_{\la_n,b}^{\circ k}(c_{\la_n,b}) \in \mathcal{P}_{n,b}^{k_b}.\qedhere\]}
\end{proof}

\begin{rem}
{ By the seminal work of Yoccoz \cite{Yoc95}, the Brjuno condition is sharp (necessary and sufficient) 
for the linearisability of a quadratic polynomial with an irrationally indifferent fixed point. 
That optimality remains open for cubic polynomials in general.
However, it follows from a general result Perez-Marco that if $f_{e^{2i\pi\theta,a_0}}$ is a Siegel polynomial of capture type, 
for some $\theta \in \R \setminus \Q$ and $a_0 \in \C$, then $\theta$ must be a Brjuno number. 
That is because if $f_{e^{2i\pi\theta,a}}$ is of capture type, there is an open set $U \subset \C$ containing $a_0$ such that for all $a \in U$, 
$f_{e^{2i\pi\theta, a}}$ has a Siegel disk. 
But by \cite{Pe2001}, if $\theta \in \R \setminus \Q$ is non-Brjuno, then 
\[\{a \in \C: f_{e^{2i\pi\theta,a}} \text{ has a Siegel disk } \}\]
must be a polar set, which is impossible here.}

The same result of Perez-Marco \cite{Pe2001} implies that if $\theta$ is not Brjuno but $f_{e^{2i\pi\theta},a_0}$ has a Siegel disk containing $0$, 
then $a_0$ is in the bifurcation locus of the slice $\{f_{e^{2i\pi\theta,a}} , a \in \C\}$. 
Then by \cite{MSS83} or \cite{Ly83}, arbitrarily close to $a_0$, there must be some $a_1 \in \C$ such that $f_{e^{2i\pi\theta,a_1}}$ has a neutral cycle 
which is non-persistent in the family $\{f_{e^{2i\pi\theta,a}} , a \in \C\}$. 
In particular, $f_{e^{2i\pi\theta},a_1}$ has 2 neutral cycles, therefore by \cite{buff2009bifurcation} it is in the support of $\mubif$.
This proves that if there exists cubic polynomial maps with Siegel disks with non-Brjuno rotation numbers (which are conjectured not to exist), 
then they must also be in { $\mubif$}. 
\end{rem}

{ We have now completed the ingredients we need to prove Theorem~\ref{th:supp} for rotation numbers of bounded type.
In order to generalise it to all Brjuno numbers, we shall use the following result due to Avila, Buff and Chéritat.}  

\begin{theo}[\cite{ABC04}, Theorem 3, p.~11]\label{th:abc}
Let $\theta$ be a Brjuno number, and let $\Phi$ denote the Brjuno-Yoccoz function. Let $\theta_n \to \theta$, where $\theta_n$ are Brjuno numbers and 
\[\limsup_{n \to + \infty}\Phi(\theta_n) \to \Phi(\theta)+C\]
for some constant $C \geq 0$.
Let $f_n(z)=e^{2\pi i \theta_n} z+ O(z^2)$, for $n \in \N$, be a sequence of holomorphic maps on $\D$ such that 
$f_n \to R_\theta$ (the rigid rotation of angle $\theta$) locally uniformly on $\D$. 
Then 
\[\liminf_{n \to +\infty} \log r(\Delta_{f_n}) \geq e^{-C}.\]
\end{theo}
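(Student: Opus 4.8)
The plan is to derive the bound from Yoccoz's renormalisation theory for analytic germs with an irrationally indifferent fixed point, by tracking the quantity $\Upsilon(g):=\log r(\Delta_g)+\Phi(\mathrm{rot}(g))$ along renormalisation, where $\mathrm{rot}(g)$ denotes the (Brjuno) rotation number of $g$ at its fixed point. Observe first that the statement is equivalent to the lower semicontinuity bound $\liminf_n \Upsilon(f_n)\ge \Phi(\theta)=\Upsilon(R_\theta)$: since $\log r(\Delta_{f_n})=\Upsilon(f_n)-\Phi(\theta_n)$, one gets $\liminf_n\log r(\Delta_{f_n})\ge \liminf_n\Upsilon(f_n)-\limsup_n\Phi(\theta_n)\ge \Phi(\theta)-(\Phi(\theta)+C)=-C$, i.e. $\liminf_n r(\Delta_{f_n})\ge e^{-C}$, which is the asserted estimate.

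Next I would set up the continued-fraction bookkeeping. Subtracting integers, assume $\theta,\theta_n\in(0,1)$; let $G$ be the Gauss map, $\alpha_k:=G^k(\theta)$, $\beta_k:=\alpha_0\cdots\alpha_k$, $\beta_{-1}:=1$, so that the functional equation $\Phi(\alpha)=-\log\alpha+\alpha\,\Phi(G\alpha)$ telescopes to $\Phi(\theta)=\sum_{k=0}^{N-1}\beta_{k-1}\log(1/\alpha_k)+\beta_{N-1}\Phi(\alpha_N)$; write $\alpha_k^{(n)},\beta_k^{(n)}$ for the data of $\theta_n$. Because $\theta$ is irrational and $\theta_n\to\theta$, for each fixed $k$ the first $k$ partial quotients of $\theta_n$ eventually coincide with those of $\theta$, whence $\alpha_k^{(n)}\to\alpha_k$ and $\beta_k^{(n)}\to\beta_k$; moreover $\beta_k\to 0$ and $\sum_k\beta_{k-1}<\infty$. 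Now recall the renormalisation operator $\mathcal R$: for a univalent germ $g$ fixing $0$ with multiplier $e^{2\pi i\alpha}$, $\alpha\in(0,1)$, $\mathcal R g$ is again univalent with multiplier $e^{2\pi i G(\alpha)}$, and the conformal radii of the Siegel disks obey an approximate scaling law which, in the $\Upsilon$ coordinate, reads $\Upsilon(g)=\alpha\,\Upsilon(\mathcal R g)+\xi(g)$, with $\xi$ a continuous, bounded functional of the germ. Iterating, $\Upsilon(g)=\beta_{N-1}\Upsilon(\mathcal R^N g)+\sum_{k=0}^{N-1}\beta_{k-1}\,\xi(\mathcal R^k g)$, which formally for $g=R_\theta$ reproduces $\Phi(\theta)=\beta_{N-1}\Upsilon(\mathcal R^N R_\theta)+\sum_{k=0}^{N-1}\beta_{k-1}\xi(\mathcal R^k R_\theta)$.

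The hypothesis $f_n\to R_\theta$ locally uniformly on $\mathbb D$ enters as follows. Since $R_\theta$ is univalent on $\mathbb D$, Hurwitz's theorem makes each $f_n$ univalent on a disk of radius tending to $1$; and since $R_\theta$ is maximally linearisable (its Siegel disk is all of $\mathbb D$), one can renormalise $f_n$ an increasing number $N(n)\to\infty$ of times while keeping the successive renormalisations univalent on round domains of controlled size and sufficiently close to rigid rotations that the error functionals $\xi(\mathcal R^k f_n)$, for $k<N(n)$, stay close to those of the rigid-rotation tower. Feeding this into the telescoped identity, applying Yoccoz's universal lower bound $\Upsilon(h)\ge -C_0$ for univalent germs on a round domain at depth $N(n)$ (where $\beta_{N(n)-1}^{(n)}C_0\to 0$), and using the convergence of the partial Brjuno sums $S_N^{(n)}:=\sum_{k<N}\beta_{k-1}^{(n)}\log(1/\alpha_k^{(n)})\to S_N$ together with $S_N\to\Phi(\theta)$, a diagonal choice of $N(n)$ yields $\liminf_n\Upsilon(f_n)\ge\Phi(\theta)$; by the first paragraph this is the theorem.

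The hard part is exactly the control of the renormalisation tower of $f_n$ near the rigid rotation. Renormalisation at $R_\theta$ is a genuinely degenerate limit — the fundamental annulus between the Siegel disk and the remainder of the domain collapses — so one cannot simply pass to the limit $\mathcal R^k f_n\to\mathcal R^k R_\theta$; instead one must keep uniform (in $n$) quantitative hold on the geometry of the renormalisation domains and on $\xi(\mathcal R^k f_n)$ up to the slowly growing depth $N(n)$, and balance the accumulated errors against the Brjuno tail $\Phi(\theta)-S_{N(n)}$ and the decay of $\beta_{N(n)-1}^{(n)}$. This quantitative balance is the analytic substance of the Avila–Buff–Chéritat estimate, and it is what makes the explicit hypothesis on $\limsup_n\Phi(\theta_n)$ both natural and necessary.
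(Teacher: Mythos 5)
This theorem is quoted from Avila--Buff--Ch\'eritat~\cite{ABC04} and used as a black box; the paper contains no proof of it, so your sketch can only be judged on its own terms. Your opening reduction is correct and is worth stating cleanly: setting $\Upsilon(g)=\log r(\Delta_g)+\Phi(\mathrm{rot}(g))$ and using $\liminf(a_n-b_n)\ge\liminf a_n-\limsup b_n$, the theorem (after correcting the evident typo in the statement, which should read $\liminf_n r(\Delta_{f_n})\ge e^{-C}$, equivalently $\liminf_n\log r(\Delta_{f_n})\ge -C$) is exactly the lower semicontinuity $\liminf_n\Upsilon(f_n)\ge\Upsilon(R_\theta)=\Phi(\theta)$, and your use of $\limsup_n\Phi(\theta_n)\le\Phi(\theta)+C$ there is precisely how the hypothesis enters.

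Beyond that reduction, however, the proposal is a strategy outline rather than a proof, and the gap sits exactly where you place it. You invoke a recursion $\Upsilon(g)=\alpha\,\Upsilon(\mathcal{R}g)+\xi(g)$ with $\xi$ a \emph{continuous, bounded} functional of the germ, and plan to compare $\xi(\mathcal{R}^k f_n)$ to the rigid-rotation values up to depth $N(n)\to\infty$. What Yoccoz's renormalisation theory actually provides is a pair of one-sided estimates with universal additive constants, $|\Upsilon(g)-\alpha\,\Upsilon(\mathcal{R}g)|\le C_0$; iterating this gives only $\liminf_n\Upsilon(f_n)\ge\Phi(\theta)-O(1)$, which is strictly weaker. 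Passing to the sharp constant requires showing that the accumulated renormalisation errors over the first $N(n)$ levels tend to zero as $f_n\to R_\theta$, uniformly in $n$ up to a depth $N(n)$ growing to infinity, and this is delicate for the reason you yourself identify: $R_\theta$ is maximally linearisable, so the annuli that usually control the renormalisation geometry degenerate, and one cannot take a naive limit $\mathcal{R}^k f_n\to\mathcal{R}^k R_\theta$. Naming this ``the hard part'' is accurate, but it is not a proof: that uniform depth-$N(n)$ estimate \emph{is} the Avila--Buff--Ch\'eritat theorem, and the proposal neither supplies it nor specifies the concrete mechanism (sector renormalisation with explicit modulus estimates, perturbed Fatou coordinates, quasiconformal comparison) by which it could be obtained.
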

{ In the above statement, $r(\Delta_{f_n})$ denotes the conformal radius of the Siegel disk $\Delta_{f_n}$ of $f_n$ centred at $0$.} 

{ We now present the proof of the main result.}

\begin{proof}[Proof of Theorem \ref{th:supp}]
{ Let $a_0$ be an arbitrary parameter in $U$, which is not the centre of $U$. 
We consider two cases, based on whether $a_0$ lies in a non-degenerate parabolic locus or not.

\medskip

Case i) The parameter $a_0$ does not belong to a non-degenerate parabolic locus.  

\smallskip 

Let us first show that if $b_n(c)=0$ for some $n \in \N$ and $c \in \C$, then $(\la_n,c) \in  \supp \mubif$.
Indeed, by definition of $b_n$, the map $f_{\la_n,c}$ has a parabolic fixed point of parabolic multiplicity 2 at the origin. 
The immediate parabolic basin is therefore comprised of $2q_n$ Fatou components. 
On the other hand, when $b_n(a) \neq 0$, the parabolic multiplicity is $1$ and there are only $q_n$ Fatou components in the immediate parabolic basin of $0$. 
Therefore, the map $f_{\la_n,c}$ cannot be $J$-stable in the slice family $\{f_{\la_n,a} : a \in \C\}$. 
The classical results of  \cite{MSS83} and \cite{Ly83} imply the existence of $a_n'$ arbitrarily close to $c$ such that 
$f_{\la_n,a_n'}$ has a neutral cycle which is non-persistent in the slice $\{f_{\la_n,a} : a \in \C\}$.
Therefore $f_{\la_n,a_n'}$ has exactly 2 neutral cycles, including the parabolic fixed point at the origin. 
By \cite{buff2009bifurcation}, $(\la_n, a_n') \in \supp \mubif$, and since we may choose $a_n'$ arbitrarily close to $c$, so is $(\la_n,c)$.

By definition, if $a_0$ does not belong to any non-degenerate parabolic locus, there is a sequence of parameters $(a_k)_{k\geq 1}$ in $U$ 
such that $a_k \to a_0$, $b_{n_k}(a_k)=0$ for some $n_k \in \N$, and $\lim_{k \to \infty} n_k= \infty$. 
By the above paragraph, each $(\la_{n_k}, a_{k}) \in \supp \mubif$, and hence $(\la, a) \in \supp \mubif$. 
}	

\medskip 

{ Case ii) The parameter $a_0$ belongs to a non-degenerate parabolic locus, say $\Omega$.}

\smallskip 

Let us first assume that $\theta$ is bounded type. Fix an arbitrary $\eps>0$. 
We will find some $a_n \in \C$ such that $\| (\la_n,a_n) - (\la,a_0)\| \leq \eps$ and $(\la_n, a_n) \in \supp \mubif$.
	
Applying Proposition \ref{prop:unstable} with $V:=\D(a_0,\eps/2)$, we conclude that there exists $n$ 
such that $|\la_n - \la|<\eps/2$ and $a_n \in \D(a_0,\eps/2)$ such that $f_{\lam_n,a_n}$ has a neutral cycle besides the fixed point $0$. 
By \cite[Main Theorem]{buff2009bifurcation}, $(\la_n,a_n) \in \supp \mubif$, and by construction $\|(\la_n,a_n) - (\la,a_0)\| \leq \eps$.
Therefore $a_0 \in \supp \mubif$.
	
\medskip
	
Now let $\theta \in \R \setminus \Q$ be Brjuno number and $a \in \C$ be such that $f_{e^{2i\pi\theta},a}$ is of capture type 
(we do not assume anymore that $\theta$ has bounded type).
Let $[a_0: a_1: \ldots]$ denote the entries of its continued fraction expansion. 
For all $n \in \N$, let $\theta_n$ be the unique irrational number whose continued fraction expansion has entries 
\[[a_0: \ldots a_n: 1 : 1\ldots ].\]
We have
\[\limn \Phi(\theta_n) = \Phi(\theta)\]
where $\Phi$ is the Brjuno-Yoccoz function. 
Let $\phi: \Delta_{\theta}(a) \to \D$ denote the linearising coordinate of the Siegel disk of $f_{e^{2i\pi\theta},a}$, normalised to map 
$\Delta_{\theta}(a)$ to the unit disk $\D$. 
Let $h_n(z):=\phi \circ f_{e^{2i\pi\theta_n},a} \circ \phi^{-1}(z)$.
	
The sequence $h_n$ satisfies the assumptions of Theorem \ref{th:abc} with $C=0$. 
It follows that for every compact $K \subset \Delta_\theta(a)$, there exists $n_0 \in \N$ such that for all $n \geq n_0$, $K \subset \Delta_{\theta_n}(a)$. 
By assumption, there exists $k \in \N^*$ such that $f_{e^{2i\pi\theta},a}^{\circ k}(c_a) \in \Delta_\theta(a)$, where $c_a$ is a critical point of 
$f_{e^{2i\pi\theta},a}$; by taking $K:=\overline{\D}(c_a,\delta)$ for $\delta>0$ small enough that $K \subset \Delta_\theta(a)$, 
we deduce that for all $n$ large enough, 	$f_{e^{2i\pi\theta_n},a}^{\circ k}(c_a) \in \Delta_{\theta_n}(a)$. 
In other words, for all $n$ large enough, $f_{e^{2i\pi\theta_n},a}$ is a Siegel polynomial of capture type, with bounded type rotation number. 
By the above, $(e^{2i\pi\theta_n},a) \in \supp \mubif$, and since $e^{2i\pi \theta_n} \to e^{2i\pi \theta}$, $(e^{2i\pi\theta},a)$  must be in $\supp \mubif$ as well.
\end{proof}

\bibliographystyle{amsalpha}
\bibliography{Data}

\providecommand{\bysame}{\leavevmode\hbox to3em{\hrulefill}\thinspace}
\providecommand{\MR}{\relax\ifhmode\unskip\space\fi MR }
\providecommand{\MRhref}[2]{%
  \href{http://www.ams.org/mathscinet-getitem?mr=#1}{#2}
}
\providecommand{\href}[2]{#2}
\begin{thebibliography}{ABC04}

\bibitem[ABC04]{ABC04}
Artur Avila, Xavier Buff, and Arnaud Cheritat, \emph{Siegel disks with smooth
  boundaries}, Acta Math. \textbf{193} (2004), no.~1, 1--30. \MR{2155030
  (2006e:37073)}

\bibitem[BB07]{berteloot2007bifurcation}
Giovanni Bassanelli and Fran\c{c}ois Berteloot, \emph{Bifurcation currents in
  holomorphic dynamics on {$\mathbb{P}^k$}}, J. Reine Angew. Math. \textbf{608}
  (2007), 201--235. \MR{2339474}

\bibitem[BE09]{buff2009bifurcation}
Xavier Buff and Adam~L. Epstein, \emph{Bifurcation measure and postcritically
  finite rational maps}, Complex dynamics: families and friends (2009),
  491--512.

\bibitem[Brj71]{Brj71}
Alexander~D. Brjuno, \emph{Analytic form of differential equations. {I}, {II}},
  Trudy Moskov. Mat. Ob\v{s}\v{c}. \textbf{25} (1971), 119--262; ibid. 26
  (1972), 199--239. \MR{0377192}

\bibitem[Che01]{Cheri01}
Arnaud Cheritat, \emph{{R}echerche d'ensemble de {J}ulia de mesure de
  {L}ebesgue strictement positive}, Phd thesis, 2001.

\bibitem[Che20]{Cheri2020}
\bysame, \emph{On the size of {S}iegel disks with fixed multiplier for cubic
  polynomials}, Preprint: \url{arXiv:2003.13337}, 2020.

\bibitem[DeM01]{demarco2001dynamics}
Laura DeMarco, \emph{Dynamics of rational maps: a current on the bifurcation
  locus}, Mathematical Research Letters \textbf{8} (2001), no.~1-2, 57--66.

\bibitem[DF08]{dujardin2008distribution}
Romain Dujardin and Charles Favre, \emph{Distribution of rational maps with a
  preperiodic critical point}, American Journal of Mathematics \textbf{130}
  (2008), no.~4, 979--1032.

\bibitem[DH93]{DH93}
Adrien Douady and John~H. Hubbard, \emph{A proof of {T}hurston's topological
  characterization of rational functions}, Acta Math. \textbf{171} (1993),
  no.~2, 263--297. \MR{1251582 (94j:58143)}

\bibitem[Duj09]{dujardin}
Romain Dujardin, \emph{Cubic polynomials: a measurable view of parameter
  space}, Complex dynamics, A K Peters, Wellesley, MA, 2009, pp.~451--489.
  \MR{2508265}

\bibitem[IM20]{inou2020support}
Hiroyuki Inou and Sabyasachi Mukherjee, \emph{On the support of the bifurcation
  measure of cubic polynomials}, Mathematische Annalen \textbf{378} (2020),
  no.~1-2, 1--12.

\bibitem[Jel94]{Jellouli94}
Habib Jellouli, \emph{{S}ur la densit\'e intrinsèque pour la mesure de
  lebesgue et quelques problèmes de dynamique holomorphe}, Phd thesis,
  Universit\'e de Paris-Sud, Centre d'Orsay, 1994.

\bibitem[Lyu83]{Ly83}
M.~Yu. Lyubich, \emph{Some typical properties of the dynamics of rational
  mappings}, Uspekhi Mat. Nauk \textbf{38} (1983), no.~5(233), 197--198.

\bibitem[MSS83]{MSS83}
R.~Ma{\~n}{\'e}, P.~Sad, and D.~Sullivan, \emph{On the dynamics of rational
  maps}, Ann. Sci. \'Ecole Norm. Sup. (4) \textbf{16} (1983), no.~2, 193--217.

\bibitem[PM01]{Pe2001}
R.~P\'erez-Marco, \emph{Total convergence or general divergence in small
  divisors}, Comm. Math. Phys. \textbf{223} (2001), no.~3, 451--464.
  \MR{1866162}

\bibitem[Sie42]{Sie42}
Carl~Ludwig Siegel, \emph{Iteration of analytic functions}, Ann. of Math. (2)
  \textbf{43} (1942), 607--612. \MR{7044}

\bibitem[Sul]{Sullivan83}
Dennis~P. Sullivan, \emph{Quasiconformal homeomorphisms and dynamics {III}:
  Topological conjugacy classes of analytic endomorphisms}, unpublished
  manuscript, Archives de l'IHES, 01/1983.

\bibitem[Sul85]{Su85}
\bysame, \emph{Quasiconformal homeomorphisms and dynamics. {I}. {S}olution of
  the {F}atou-{J}ulia problem on wandering domains}, Ann. of Math. (2)
  \textbf{122} (1985), no.~3, 401--418.

\bibitem[Yoc95]{Yoc95}
Jean-Christophe Yoccoz, \emph{Th\'{e}or\`eme de {S}iegel, nombres de {B}runo et
  polyn\^{o}mes quadratiques}, Ast\'{e}risque (1995), no.~231, 3--88, Petits
  diviseurs en dimension $1$. \MR{1367353}

\bibitem[Zak99]{Za99}
Saeed Zakeri, \emph{Dynamics of cubic {S}iegel polynomials}, Comm. Math. Phys.
  \textbf{206} (1999), no.~1, 185--233. \MR{1736986}

\bibitem[Zak16]{Zakeri16}
\bysame, \emph{Conformal fitness and uniformization of holomorphically moving
  disks}, Trans. Amer. Math. Soc. \textbf{368} (2016), no.~2, 1023--1049.
  \MR{3430357}

\end{thebibliography}
\end{document}